\theoremstyle{plain}
  \newtheorem{proposition}[]{Proposition}
  \newtheorem{lemma}[]{Lemma}
  \newtheorem{theorem}[]{Theorem}
  \newtheorem{corollary}[]{Corollary}
  \newtheorem{remark}[]{Remark}
\theoremstyle{definition}
\title{Critical first-passage percolation starting on the boundary}
\author{Jianping Jiang}
\address{NYU-ECNU Institute of Mathematical Sciences at NYU Shanghai, 3663 Zhongshan
Road North, Shanghai 200062, China.}
\email{jjiang@nyu.edu}
\author{Chang-Long Yao}
\address{Academy of Mathematics and Systems Science, CAS, Beijing, China.}
\email{deducemath@126.com}
\begin{document}
\begin{abstract}
We consider first-passage percolation on the two-dimensional triangular lattice $\mathcal{T}$. Each site $v\in\mathcal{T}$ is assigned independently a passage time of either $0$ or $1$ with probability $1/2$. Denote by $B^+(0,n)$ the upper half-disk with radius $n$ centered at $0$, and by $c_n^+$ the first-passage time in $B^+(0,n)$ from $0$ to the half-circular boundary of $B^+(0,n)$. We prove
\[\lim_{n\rightarrow\infty}\frac{c_n^+}{\log n}=\frac{\sqrt{3}}{2\pi}~ a.s.,~\lim_{n\rightarrow\infty}\frac{E c_n^+}{\log n}=\frac{\sqrt{3}}{2\pi},~\lim_{n\rightarrow\infty}\frac{\mathrm{Var}(c_n^+)}{\log n}=\frac{2\sqrt{3}}{\pi}-\frac{9}{\pi^2}.\]
These results enable us to prove limit theorems with explicit constants for any first-passage time between boundary points of Jordan domains. In particular, we find the explicit limit theorems for the cylinder point to point and cylinder point to line first-passage times.
\end{abstract}
\maketitle
\section{Introduction}
Since Hammersley and Welsh \cite{HW65} introduced first-passage percolation (FPP) in 1965, this stochastic growth model has attracted much attention from mathematicians and physicists.  For the main results and recent developments of FPP, we refer the reader to the survey \cite{ADH16}, especially Section 3.7.1 there for critical FPP. For Bernoulli critical FPP on the triangular lattice,  the author in \cite{Yao16} derived the exact asymptotic behavior for the first-passage time from the center of a disk to its boundary. In this paper, we consider a boundary version of that result. Namely, we study the first-passage time in a half-disk from its center to  its half-circular boundary.

Here is an alternative description of our model. Construct a random maze on the hexagonal lattice by putting an obstacle on each hexagon according to the outcome of a fair coin toss. Consider a walker in the maze starting at the hexagon centered at 0. What is the minimum number of obstacles the walker has to cross to reach the circle of radius $n$ centered at 0 (interior point version)? Or, if the walker is only allowed to walk in the upper half-plane, what is the minimum number of obstacles the walker has to cross to reach the upper half-circle of radius $n$ centered at 0 (boundary point version)? From the results in \cite{Yao16} and this paper, one can see that when $n$ is large with high probability the latter quantity is approximately 3 times the former. See Figure \ref{fig1} for an illustration.

\begin{figure}
\begin{center}
\includegraphics[height=0.4\textwidth]{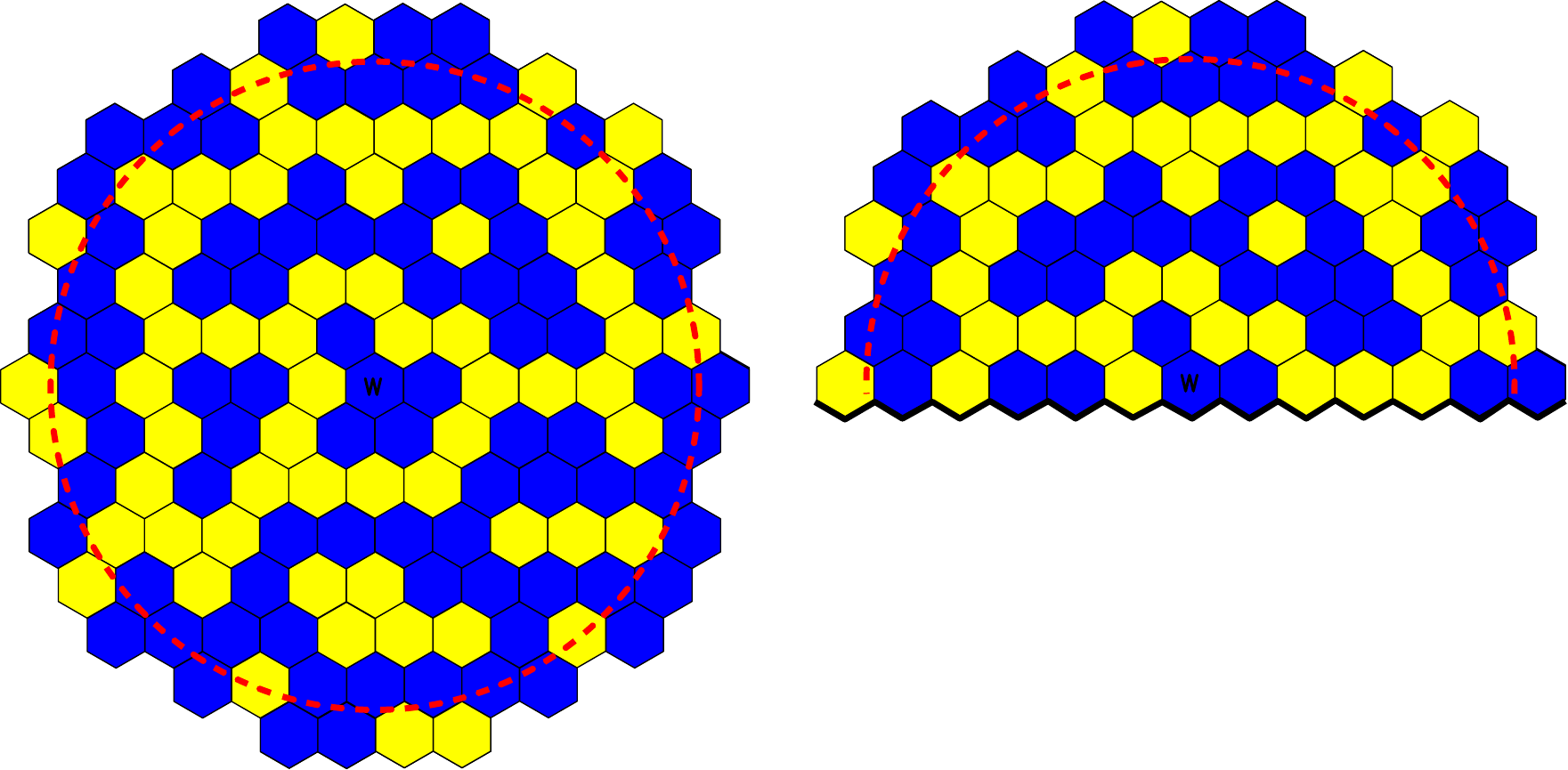}
\caption{All blue hexagons are void spaces and all yellow hexagons are
obstacles.  W means that the walker starts at the hexagon
centered at 0.  The walker has to cross at least 1 obstacle to
reach the red circle in the left maze, and cross at least 2
obstacles to reach the red half-circle in the right
maze.}\label{fig1}
\end{center}
\end{figure}

Let $\mathcal{T}=(\mathbf{V},\mathbf{E})$ be the two-dimensional triangular lattice, where $\mathbf{V}=\{x+ye^{i\pi/3}:x,y\in\mathbb{Z}\}$ is the set of sites, and $\mathbf{E}=\{\{x,y\}:x,y\in\mathbf{V},\|x-y\|_2=1\}$ is the set of bonds. Suppose $\{t(v):v\in\mathbf{V}\}$ is a family of i.i.d. Bernoulli random variables: each $t(v)$ takes the value $0$ or $1$ with equal probability. This is \textbf{critical site percolation} on $\mathcal{T}$, but we also call it \textbf{critical first-passage percolation} on $\mathcal{T}$ for reasons that will be clear below. We write $P$ for this critical percolation measure, and $E$ for the corresponding expectation. A \textbf{path} is a sequence $(v_0,v_1,\cdots,v_n)$ of distinct sites of $\mathcal{T}$ such that $\{v_{j-1},v_j\}\in\mathbf{E}$ for each $j=1,2,\cdots,n$. For a path $\gamma=(v_0,v_1,\cdots,v_n)$, define the \text{passage time} of $\gamma$ as
\[T(\gamma)=\sum_{j=0}^{n}t(v_j).\]

We also consider the dual of $\mathcal{T}$, the two-dimensional hexagonal lattice $\mathcal{H}=(\mathbf{V}_d,\mathbf{E}_d)$, such that each $v\in\mathbf{V}$ lies at the center of exactly one face (or hexagon) of $\mathcal{H}$. Each hexagon is assigned the same value as its center (which is a site in $\mathbf{V}$). For $n\in\mathbb{N}$, let $B^+(0,n)$ be the smallest connected domain of hexagons containing the closed upper half-disk centered at $0$ with radius $n$.  Let $\Delta_oB^+(0,n)$ be the half-circular boundary of $B^+(0,n)$, that is, the set of hexagons that do not belong to $B^+(0,n)$ but are adjacent to those hexagons (with centers lying on or above $x$-axis) intersecting the half-circle of radius $n$ centered at $0$.
Denote by $c_n^+$ the first-passage time in $B^+(0,n)$ between $0$ and $\Delta_oB^+(0,n)$. More precisely,
\begin{eqnarray*}
c_n^+:=\inf\{T(\gamma): \gamma\subseteq B^+(0,n) \text{ starting at }0 \text{ and ending at a neighbor } \text{of }\Delta_o B(0,n)\}.
\end{eqnarray*}

Our main theorem is
\begin{theorem}\label{thmmain}
\[\lim_{n\rightarrow\infty}\frac{c_n^+}{\log n}=\frac{\sqrt{3}}{2\pi}~ a.s.,~\lim_{n\rightarrow\infty}\frac{E c_n^+}{\log n}=\frac{\sqrt{3}}{2\pi},~\lim_{n\rightarrow\infty}\frac{\mathrm{Var}(c_n^+)}{\log n}=\frac{2\sqrt{3}}{\pi}-\frac{9}{\pi^2}.\]
\end{theorem}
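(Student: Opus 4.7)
The plan is to follow the strategy of \cite{Yao16} for the interior first-passage time $c_n$ and adapt it to the half-plane setting by substituting half-plane arm exponents and half-plane CLE$_6$ inputs for their bulk counterparts. The first step is max-flow/min-cut duality for Bernoulli site passage times on $\mathcal T$: I will identify $c_n^+$ with $N_n^+$, the maximum number of vertex-disjoint closed \emph{half-circuits} inside $B^+(0,n)\setminus\{0\}$ separating $0$ from $\Delta_oB^+(0,n)$, where a closed half-circuit is a connected chain of hexagons with passage time $1$ whose two endpoints touch the $x$-axis on opposite sides of $0$. Then I decompose $B^+(0,n)$ into dyadic half-annuli $A_k^+=B^+(0,2^{k+1})\setminus B^+(0,2^k)$ and let $Z_k$ be the number of disjoint closed half-circuits contained entirely in $A_k^+$; half-plane RSW and a Reimer/BK argument should show $N_n^+=\sum_kZ_k+O(1)$ in $L^2$, reducing all three asymptotics in the theorem to those of $\sum_kZ_k$.

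For the first moment, conformal invariance of critical site percolation on $\mathcal T$ in Jordan domains (Smirnov; Camia--Newman) lets me express $\lim_kE[Z_k]$ as the expected number of CLE$_6$ macroscopic arcs in a fixed half-annulus that touch both its inner and outer $x$-axis sides, a quantity computable via the half-plane Cardy--Smirnov crossing formula together with the half-plane one-arm exponent $1/3$; this should work out to $\frac{\sqrt 3}{2\pi}\log 2$, giving $Ec_n^+=\frac{\sqrt 3}{2\pi}\log n+o(\log n)$. For the variance I expand
\[\mathrm{Var}(N_n^+)=\sum_k\mathrm{Var}(Z_k)+2\sum_{j<k}\mathrm{Cov}(Z_j,Z_k),\]
using quasi-multiplicativity of half-plane arm events so that covariances between well separated scales decay geometrically in $k-j$. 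The within-scale contributions reduce to an explicit CLE$_6$ two-annulus computation, which I expect to yield the constant $\frac{2\sqrt 3}{\pi}-\frac{9}{\pi^2}$ per unit of $\log n$.

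With $\mu=\sqrt 3/(2\pi)$ and $\sigma^2=2\sqrt 3/\pi-9/\pi^2$ in hand, the a.s.\ statement follows by a two-step Chebyshev/Borel--Cantelli argument: first along a sparse subsequence $n_j=2^{j^2}$, where $\sum_j\mathrm{Var}(c_{n_j}^+)/(\log n_j)^2<\infty$, and then interpolating to all $n$ using monotonicity of $c_n^+$ in $n$ together with $\log n_{j+1}/\log n_j\to 1$. The main technical obstacle is the explicit identification of the variance constant $2\sqrt 3/\pi-9/\pi^2$: while \cite{Yao16} carries out the analogous bulk computation, here every half-circuit is anchored to the $x$-axis, so arm decompositions must split into bulk and boundary contributions and the two-scale joint probabilities reduce to half-plane Cardy-type integrals whose closed-form evaluation is responsible for the $-9/\pi^2$ term. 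Controlling the error in $N_n^+=\sum_kZ_k+O(1)$ also requires care, since half-circuits can hug the $x$-axis and potentially span several dyadic scales; quantitative half-plane RSW estimates should be needed to rule this out.
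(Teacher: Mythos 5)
Your skeleton (min--cut duality to disjoint yellow half-circuits, dyadic half-annuli, Chebyshev/Borel--Cantelli along $n_j=2^{j^2}$ plus monotonicity) matches the coarse structure of the paper, but the decisive step --- producing the constants $\frac{\sqrt3}{2\pi}$ and $\frac{2\sqrt3}{\pi}-\frac{9}{\pi^2}$ --- is not obtainable from the tools you invoke, and this is where the proposal has a genuine gap. Cardy--Smirnov's crossing formula and the half-plane one-arm exponent $1/3$ give crossing probabilities and polynomial decay rates; they do not give the expected \emph{number} of disjoint half-circuits in a half-annulus, and in fact $\lim_k E[Z_k]$ for a fixed ratio-$1/2$ half-annulus is not exactly $\frac{\sqrt3}{2\pi}\log 2$ (only the large-aspect-ratio asymptotics holds, which is why the paper takes a double limit over scales $2^k$ with $k\to\infty$ after $n\to\infty$). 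The paper's route is: (i) a color-switching bijection (Proposition \ref{propeqi}) showing that under monochromatic boundary conditions the half-annulus passage time has the \emph{same law} as the number of nested interface half-loops surrounding $0$; (ii) convergence of the chordal exploration path to chordal SLE$_6$, so the problem becomes the law of the increments $Z_k=\log\mathrm{CR}(\tilde D_{k-1},0)-\log\mathrm{CR}(\tilde D_k,0)$ of conformal radii of successive SLE$_6$ half-loops; and (iii) an explicit SLE computation (Theorem \ref{thm2}) in which a time change reduces this law to the hitting time appearing in the CLE$_{24/5}$ conformal-radius formula of Schramm--Sheffield--Wilson, yielding $\mathbb{E}e^{\lambda Z}=\sqrt3/\bigl(2\cos(\pi\sqrt{1/36+2\lambda/3})\bigr)$, hence $\mathbb{E}Z=2\sqrt3\pi/3$ and $\mathrm{Var}(Z)=16\pi^2/3-8\sqrt3\pi$; renewal theory then gives the mean rate $1/\mathbb{E}Z=\sqrt3/(2\pi)$ and the variance rate $\mathrm{Var}(Z)/(\mathbb{E}Z)^3=2\sqrt3/\pi-9/\pi^2$. (The paper's alternative proof of the mean uses Hongler--Smirnov's exact formula for the expected number of clusters crossing a rectangle --- again a much stronger input than Cardy's formula.) Without step (i) you also have no license to pass from your combinatorial quantity (maximal families of disjoint half-circuits) to any CLE$_6$ arc count: the identification depends on the boundary condition and is exactly what the color-switching argument provides.

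The variance treatment is also structurally off. If covariances between scales $j<k$ decay only geometrically in $k-j$, then $\sum_{j<k}\mathrm{Cov}(Z_j,Z_k)$ is of order $\log n$ --- the same order as $\sum_k\mathrm{Var}(Z_k)$ --- so the cross-scale terms contribute to the limiting constant and cannot be discarded or left as ``an explicit two-annulus computation''; indeed the true constant $\mathrm{Var}(Z)/(\mathbb{E}Z)^3$ is a renewal-type constant that encodes precisely these cross-scale dependencies (a naive sum of independent per-scale variances gives a different, wrong answer). The paper instead runs the Kesten--Zhang martingale decomposition along innermost blue half-circuits (which regenerate the configuration outside them), and identifies the limit of the martingale increments' second moments via Propositions \ref{propn} and \ref{propnn}, i.e., via the conformal-radius renewal structure. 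Your almost-sure argument is fine in outline, but note it already requires a bound $\mathrm{Var}(c_n^+)=O(\log n)$ (or a large-deviation estimate as in Lemma \ref{lemld}) established independently of the variance constant you are trying to prove.
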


\begin{remark}
For FPP corresponding to other critical percolation models (e.g., bond percolation on the square lattice \cite{Gri99} and Voronoi percolation \cite{BR06}), it is expected that Theorem \ref{thmmain} still holds, provided that the convergence of chordal exploration path to chordal SLE$_6$ is established. The reason is the following: similarly to the proof of Proposition \ref{propeqi}, one can show that $c_n^+$ equals the maximum number of disjoint yellow half-circuits surrounding $0$ in $B^+(0,n)$; then using the idea from Section 4 of \cite{BN11}, one can express the event $\{c_n^+\geq k\}$ in terms of the collection of all cluster interfaces.
\end{remark}

\begin{remark}
With an argument similar to (1.13) in \cite{KZ97} and Theorem 1.6 in
\cite{DLW16}, one can prove that
\begin{equation*}
\frac{c_n^+-Ec_n^+}{\sqrt{\mathrm{Var}(c_n^+)}}\rightarrow N(0,1)\mbox{ in
distribution as }n\rightarrow\infty,
\end{equation*}
where $N(0,1)$ is a standard normal random variable (with mean 0 and
variance 1).  This, together with Theorem \ref{thmmain}, implies that there exists
a function $\eta(n)$ with $\eta(n)\rightarrow 0$ as $n\rightarrow \infty$,
such that
\begin{equation*}
\frac{c_n^+-(1+\eta(n))\sqrt{3}\log
n/(2\pi)}{\sqrt{(2\sqrt{3}/\pi-9/(\pi)^2)\log n}}\rightarrow
N(0,1)\mbox{ in distribution as }n\rightarrow\infty.
\end{equation*}
We conjecture, but can not prove, that one may choose $\eta\equiv 0$.  Let us point out
that the explicit form of the CLT in Corollary 1.2 of \cite{Yao16} should
be replaced with a similar weaker form.
\end{remark}

Let $\mathcal{H}_{\delta}:=\delta\mathcal{H}$ be the two-dimensional hexagonal lattice with lattice spacing $\delta$. For $\alpha\in(0,2\pi)$, let $
\mathbb{D}^{\alpha}:=\{re^{i\theta}: 0<r<1,0<\theta<\alpha\}$ be the circular sector with center angle $\alpha$. Let $\mathbb{D}^{\alpha}_{\delta}$ be the smallest connected domain of hexagons (with lattice spacing $\delta$) containing $\overline{\mathbb{D}^{\alpha}}$. Consider critical percolation on $\mathbb{D}^{\alpha}_{\delta}$ and write $E_{\delta}$ for the corresponding expectation. Let $0_{\delta}$ be the center of a closest hexagon of $\mathbb{D}^{\alpha}_{\delta}$ to $0$. Define $c^+_{\delta}(\alpha)$ to be the first-passage time in $\mathbb{D}^{\alpha}_{\delta}$ between $0_{\delta}$ and the circular part of the boundary of $\mathbb{D}^{\alpha}_{\delta}$. Then Theorem \ref{thmmain} can be generalized to the following:
\begin{corollary}\label{cormain}
\[\lim_{\delta\downarrow0}\frac{c_{\delta}^+(\alpha)}{-\log \delta}=\frac{\sqrt{3}}{2\alpha} ~a.s.,~\lim_{\delta\downarrow0}\frac{E_{\delta} c_{\delta}^+(\alpha)}{-\log \delta}=\frac{\sqrt{3}}{2\alpha},~\lim_{\delta\downarrow0}\frac{\mathrm{Var}_{\delta}(c_{\delta}^+(\alpha))}{-\log \delta}=\frac{2\sqrt{3}}{\alpha}-\frac{9}{\pi\alpha}.\]
\end{corollary}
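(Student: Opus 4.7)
The plan is to reduce Corollary \ref{cormain} to Theorem \ref{thmmain} via the conformal map $\phi(z)=z^{\pi/\alpha}$, which bijectively sends the sector $\mathbb{D}^{\alpha}$ onto the half-disk $\mathbb{D}^{\pi}$, fixes the origin, sends the two straight sides of the sector to the two halves of the diameter, and sends the circular arc to the upper half-circle. First, imitating the Proposition \ref{propeqi} referenced in the first remark, one shows that $c^+_{\delta}(\alpha)$ equals the maximum number of vertex-disjoint yellow ``sector-circuits'' in $\mathbb{D}^{\alpha}_{\delta}$ separating $0_{\delta}$ from the circular arc, reducing the problem to counting such circuits.

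Next, I would split the count at an intermediate scale $\epsilon\in(0,1)$. For the macroscopic region $\{\epsilon<|z|<1\}$, the convergence of critical cluster interfaces to $\mathrm{CLE}_6$ on Jordan domains, combined with the conformal invariance of the continuum limit, implies that the law of the interfaces in $\mathbb{D}^{\alpha}_{\delta}$ restricted to this annular sector converges, as $\delta\to 0$, to the $\phi^{-1}$-image of the $\mathrm{CLE}_6$ in $\mathbb{D}^{\pi}$ restricted to the half-annulus $\{\epsilon^{\pi/\alpha}<|z|<1\}$; since the number of separating yellow sector-circuits is a topological functional of the collection of interfaces, this count matches in distribution (as $\delta\to 0$) the corresponding count of separating yellow half-circuits in the half-annulus. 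For the inner region $\{\delta<|z|\le\epsilon\}$, the key observation is that a dyadic sector-annulus $\{r<|z|<2r\}\subset\mathbb{D}^{\alpha}$ corresponds under $\phi$ to a half-annulus in $\mathbb{D}^{\pi}$ of log-width $(\pi/\alpha)\log 2$; hence, by the same RSW, quasi-multiplicativity, and $\mathrm{SLE}_6$ boundary-arm inputs that underlie Theorem \ref{thmmain}, the expected number of yellow sector-circuits per unit log-radius is $\pi/\alpha$ times the half-disk rate $\sqrt{3}/(2\pi)$, and the variance per unit log-radius is $\pi/\alpha$ times $2\sqrt{3}/\pi-9/\pi^2$. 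Summing over dyadic scales from $\delta$ up to $\epsilon$ and then letting $\epsilon\to 0$ after $\delta\to 0$ produces the claimed constants $\sqrt{3}/(2\alpha)$ and $2\sqrt{3}/\alpha-9/(\pi\alpha)$.

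The almost sure statement then follows from these moment asymptotics by the same Borel--Cantelli plus concentration argument used to pass from the $L^2$ version to the almost sure version in Theorem \ref{thmmain}, applied along a geometric subsequence such as $\delta_k=2^{-k}$. The main obstacle is handling the corner of angle $\alpha$ at the origin, where $\phi$ is singular: one has to verify that the polychromatic boundary arm probabilities at this corner transform correctly under $\phi$, so that the small-scale contribution is governed by the half-plane $2$-arm exponent pulled back through the angular rescaling, uniformly in the scale. The discrepancy $|0_{\delta}-0|=O(\delta)$ is harmless on the logarithmic scale of interest and can be absorbed into the $\epsilon$-error.
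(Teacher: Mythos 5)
Your proposal is correct and takes essentially the same route as the paper: the entire content of Corollary \ref{cormain} is the transfer of the half-disk constants through the power map $\phi(z)=z^{\pi/\alpha}$ via conformal invariance of the scaling limit, after which the discrete arguments of Theorem \ref{thmmain} are rerun verbatim in the sector. The paper implements the continuum step more directly through the exact identity $N^{\alpha}(\epsilon)\overset{d}{=}N(\epsilon^{\pi/\alpha})$ for the nested chordal SLE$_6$ half-loop counts, so that Proposition \ref{propn} immediately yields the $\pi/\alpha$-rescaled constants with no corner-arm estimates needed, and the variance limit is obtained from the martingale argument of Section \ref{secvar} rather than by summing per-scale variances over dyadic annuli, whose literal additivity would require exactly the decoupling that the martingale method supplies.
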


Suppose $D\subsetneq\mathbb{C}$ is a Jordan domain (i.e., $\partial D$ is a homeomorphism of the unit circle) and $a, b\in \partial D$. Let $\{\beta(t):t\geq 0\}$ be some continuous parametrization of $\partial D$. We assume $\partial D$ has both left and right tangent lines at $a$ and $b$, i.e., $\beta(t)$ has left and right derivatives at $t_0$ and $t_1$ where $\beta(t_0)=a$ and $\beta(t_1)=b$. Let $\Theta_D(a)$ be the angle subtended by the left and right tangent lines at $a$. Then $\Theta_D(a)\in(0,2\pi)$ and whether $\Theta_D(a)>\pi$ or not can be determined easily by comparing a small enough neighborhood of $a$ in $D$ with a circular sector. Define $\Theta_D(b)$ in the same way. Let $D_{\delta}$ be the smallest connected domain of hexagons containing $\overline{D}$. Denote by $T_{D_{\delta}}(a_{\delta}, b_{\delta})$ the first-passage time  in $D_{\delta}$ between $a_{\delta}$ and $b_{\delta}$. Then we have
\begin{proposition}\label{propbb}
\[\lim_{\delta\downarrow 0}\frac{T_{D_{\delta}}(a_{\delta}, b_{\delta})}{-\log\delta}=\frac{\sqrt{3}}{2\Theta_D(a)}+\frac{\sqrt{3}}{2\Theta_D(b)} \text{ in probability},~\lim_{\delta\downarrow 0}\frac{E_{\delta}T_{D_{\delta}}(a_{\delta}, b_{\delta})}{-\log\delta}=\frac{\sqrt{3}}{2\Theta_D(a)}+\frac{\sqrt{3}}{2\Theta_D(b)}.\]
\end{proposition}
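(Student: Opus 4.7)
The plan is to localize near $a$ and $b$ and reduce to Corollary \ref{cormain}. Fix $\epsilon>0$. By the tangent-line hypothesis, there exists $\eta=\eta(\epsilon)>0$ such that $B(a,\eta)$ and $B(b,\eta)$ are disjoint and, after a rotation matching the tangent directions, one has the sandwich
\[
S_a^-\subseteq D\cap B(a,\eta)\subseteq S_a^+,
\]
where $S_a^{\pm}$ are circular sectors of radius $\eta$, apex $a$, and opening angles $\Theta_D(a)\mp\epsilon$; an analogous sandwich holds at $b$. Let $T_a^\pm(\delta)$ denote the first-passage time in $(S_a^\pm)_\delta$ from $a_\delta$ to the circular arc of $\partial S_a^\pm$, define $T_b^\pm(\delta)$ symmetrically, and let $T_m(\delta)$ denote the first-passage time in $D_\delta\setminus(B(a,\eta)\cup B(b,\eta))$ between $\partial B(a,\eta)\cap D_\delta$ and $\partial B(b,\eta)\cap D_\delta$. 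Corollary \ref{cormain}, applied after rescaling each $S_a^\pm$ to unit radius (lattice spacing $\delta/\eta$), gives
\[
\lim_{\delta\downarrow 0}\frac{T_a^{\pm}(\delta)}{-\log\delta}=\frac{\sqrt3}{2(\Theta_D(a)\mp\epsilon)}
\]
almost surely and in expectation, and similarly for $T_b^\pm(\delta)$.

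For the upper bound I would concatenate an optimal path realizing $T_a^-(\delta)$, a path realizing $T_m(\delta)$, and a path realizing $T_b^-(\delta)$; non-negativity of the weights yields $T_{D_\delta}(a_\delta,b_\delta)\le T_a^-(\delta)+T_m(\delta)+T_b^-(\delta)$. For the lower bound, any path from $a_\delta$ to $b_\delta$ in $D_\delta$ contains a sub-path inside $D_\delta\cap B(a,\eta)$ from $a_\delta$ to $\partial B(a,\eta)\cap D_\delta$, whose passage time is at least $T_a^+(\delta)$ by domain monotonicity ($(S_a^+)_\delta$ is a larger ambient domain and its circular arc is a larger target), and a disjoint sub-path with the analogous property at $b$, giving
\[
T_{D_\delta}(a_\delta,b_\delta)\ge T_a^+(\delta)+T_b^+(\delta)-O(1).
\]

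The main technical step is to show $T_m(\delta)=O_P(1)$ with $E_\delta T_m(\delta)=O(1)$ uniformly in $\delta$. This should follow from standard RSW/FKG arguments: one bounds $P(T_m(\delta)\ge k)$ by the probability of $k$ disjoint $1$-circuits separating $\partial B(a,\eta)\cap D$ from $\partial B(b,\eta)\cap D$ inside the middle region, and then uses RSW-type estimates to show that each of $k$ disjoint annular sub-regions carries a zero-valued cluster connecting the two arcs with probability at least some $c>0$, yielding exponential decay in $k$. Combining with the sub-path estimates above and sending $\epsilon\downarrow 0$ after $\delta\downarrow 0$ gives both convergences. The main obstacles will be extending the zero-cluster connectivity estimates to the arbitrary Jordan domain $D\setminus(B(a,\eta)\cup B(b,\eta))$, and verifying that replacing $0_\delta$ by $a_\delta$ and the discretized sector boundary by $\partial B(a,\eta)\cap D_\delta$ perturbs $T_a^\pm(\delta)$ only by $O(1)$ so that Corollary \ref{cormain} applies verbatim.
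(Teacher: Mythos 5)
Your overall strategy coincides with the paper's: localize at $a$ and $b$, sandwich the local geometry between corner domains whose angles converge to $\Theta_D(a)$ and $\Theta_D(b)$ (the paper approximates $\partial D$ near $a$ by secant/tangent lines to get nested larger and smaller domains, you use circular sectors of angles $\Theta_D(a)\mp\epsilon$ --- an equivalent device), apply Corollary \ref{cormain}, and argue that the contribution away from $a,b$ is negligible. Your lower bound is sound: the initial segment of any $a_\delta$-to-$b_\delta$ path up to its first exit from $B(a,\eta)$ is a path in $(S_a^+)_\delta$ from $a_\delta$ to its circular arc, so domain monotonicity applies, and the two local segments are disjoint.

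The genuine gap is in the upper bound. The inequality $T_{D_\delta}(a_\delta,b_\delta)\le T_a^-(\delta)+T_m(\delta)+T_b^-(\delta)$ does not follow from nonnegativity of the weights: the three optimizing paths need not share endpoints (the path realizing $T_a^-(\delta)$ ends at some point of the arc of $\partial B(a,\eta)$, while the path realizing $T_m(\delta)$ starts at a possibly different point), so their union is not a path from $a_\delta$ to $b_\delta$, and connecting them naively along $\partial B(a,\eta)\cap D_\delta$ could cost order $\eta/\delta$. This gluing is exactly the nontrivial step, and it is not covered by your estimate $T_m(\delta)=O_P(1)$, which only controls the middle crossing itself. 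The paper's proof is organized so that this issue appears only once: it fixes a separating curve $\Gamma$, bounds $T_{D_\delta}(a_\delta,\Gamma_\delta)$ between passage times to the \emph{same} target $\Gamma_\delta$ in nested modified domains (pure monotonicity, no concatenation), and then handles the single gluing statement that $T_{D_\delta}(a_\delta,b_\delta)-T_{D_\delta}(a_\delta,\Gamma_\delta)-T_{D_\delta}(b_\delta,\Gamma_\delta)$ is $o(-\log\delta)$ in probability by the circuit argument of (2.84) in \cite{KZ97} (blue circuits in dyadic annuli allow the two geodesics to be joined at controlled cost), plus a uniform integrability argument for the expectation limit --- a point your proposal also leaves open, since the mean statement requires moment control of the connection cost, not just of $T_m$. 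Finally, for the bulk you do not need new RSW estimates in an arbitrary Jordan domain: as the paper notes, Theorem \ref{thm1} (Hongler--Smirnov) bounds the expected number of $1$-clusters crossing the middle region, which together with the min-cut identity of Proposition \ref{propeqi} already gives $E_\delta T_m(\delta)=O(1)$.
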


\begin{remark}
We believe that a limit result for $\mathrm{Var}(T_{D_{\delta}}(a_{\delta}, b_{\delta}))$ is possible provided $\partial D$ is good (say, $\partial D$ is locally analytic at $a,b$). We leave the details to the interested reader.
\end{remark}

For $k\in\mathbb{Z}$, let
\[H_k=\{v\in\mathbf{V}: \mathrm{Im}(v)=\sqrt{3}k\}\]
be a hyperplane. As in \cite{SW78} and \cite{Kes86} (see also \cite{HW65}), for $m<n$ we define
\begin{align*}
&t_{m,n}=\inf\{T(\gamma):\gamma \text{ is a path from } (0,\sqrt{3}m) \text{ to } (0,\sqrt{3}n), \text{ except for its }\\
 &\hspace{0.72in} \text{ endpoints, lies strictly between }H_m \text{ and }H_n\},\\
&s_{m,n}=\inf\{T(\gamma):\gamma \text{ is a path from } (0,\sqrt{3}m) \text{ to some point in }H_n,\text{ except for its}\\
 &\hspace{0.72in} \text{ endpoints, lies strictly between }H_m \text{ and }H_n\}.
\end{align*}
$t_{0,n}$ and $s_{0,n}$ are called the \textbf{cylinder point to point} and \textbf{cylinder point to line first-passage times}, respectively.
Then we have
\begin{proposition}\label{propcyl}
\[\lim_{n\rightarrow\infty}\frac{s_{0,n}}{\log n}=\frac{\sqrt{3}}{2\pi}~a.s.,
~\lim_{n\rightarrow\infty}\frac{Es_{0,n}}{\log n}=\frac{\sqrt{3}}{2\pi},~
\lim_{n\rightarrow\infty}\frac{\mathrm{Var}(s_{0,n})}{\log n}=\frac{2\sqrt{3}}{\pi}-\frac{9}{\pi^2}.\]
\[\lim_{n\rightarrow\infty}\frac{t_{0,n}}{\log n}=\frac{\sqrt{3}}{\pi} \text{ in probability but not a.s.},\]
\[\lim_{n\rightarrow\infty}\frac{Et_{0,n}}{\log n}=\frac{\sqrt{3}}{\pi},
~\lim_{n\rightarrow\infty}\frac{\mathrm{Var}(t_{0,n})}{\log n}=\frac{4\sqrt{3}}{\pi}-\frac{18}{\pi^2}.\]
\end{proposition}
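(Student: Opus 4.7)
The plan is to reduce both cylinder quantities to Theorem \ref{thmmain} via domain-comparison arguments. First consider $s_{0,n}$. The strip $\{0\le\mathrm{Im}(v)\le\sqrt{3}n\}$ and the half-disk $B^+(0,n)$ both present an angle $\pi$ at the origin and coincide in a neighborhood of $0$, so their point-to-opposite-boundary passage times should share the same leading-order scaling, controlled by the half-plane polychromatic $3$-arm exponent $2$. I would establish $|s_{0,n}-c_n^+|=O_{L^2}(1)$ by a coupling: a near-optimal path for one quantity can be extended into a candidate for the other at $O(1)$ additional passage-time cost, using RSW and arm-exponent estimates to attach zero-passage-time segments in the symmetric difference of the two domains. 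The three assertions for $s_{0,n}$ then transfer from Theorem \ref{thmmain}. The almost sure part uses in addition the monotonicity $s_{0,n}\le s_{0,n+1}$ (the initial segment of any candidate for $s_{0,n+1}$ up to its first hit of $H_n$ is a candidate for $s_{0,n}$), which permits the standard sandwich argument on a geometric subsequence $n_k=\lfloor\alpha^k\rfloor$ together with Gaussian-type tail estimates from the variance limit.

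For $t_{0,n}$ the plan is a midline decomposition. Any path realizing $t_{0,n}$ must traverse the single-row band at $H_{\lfloor n/2\rfloor}$, and removing one row of sites yields the lower bound
\[
t_{0,n}\;\ge\;s_{0,\lfloor n/2\rfloor-1}+\tilde s_{\lfloor n/2\rfloor+1,n},
\]
with the two terms \emph{independent} (disjoint sub-strips) and each equal in distribution to an $s_{0,k}$ with $k\sim n/2$ by reflection symmetry. For the matching upper bound, concatenate (i) a near-optimal point-to-line path from $(0,0)$ to some $x_1\in H_{\lfloor n/2\rfloor}$, (ii) a near-optimal line-to-point path from some $x_2\in H_{\lfloor n/2\rfloor}$ to $(0,\sqrt{3}n)$, and (iii) a connector on $H_{\lfloor n/2\rfloor}$ between $x_1$ and $x_2$. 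The key technical input is that (iii) contributes $O(1)$ in expectation: the endpoints of near-optimal boundary geodesics concentrate in polylogarithmic windows of the midline, and within such a window a zero-passage-time connection exists with probability tending to $1$ by RSW. Combining with the $s_{0,n}$ asymptotics (the cross-term in the variance vanishing by independence of the two halves) gives the mean and variance formulas, and convergence of $t_{0,n}/\log n$ to $\sqrt{3}/\pi$ in probability follows from Chebyshev.

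Failure of almost sure convergence for $t_{0,n}/\log n$ needs a separate argument because $(t_{0,n})_n$ is non-monotone and the sandwich trick is unavailable. The strategy is to construct a sparse subsequence $(n_k)$ along which local configurations near the fixed endpoint $(0,\sqrt{3}n_k)$ are essentially independent across $k$, identify events $E_k$ (local yellow-cluster configurations near the endpoint that force extra crossings) inflating $t_{0,n_k}$ by a positive fraction of $\log n_k$ beyond its typical value, verify $\sum_kP(E_k)=\infty$ via the monochromatic $1$-arm exponent $5/48$, and apply Borel--Cantelli to rule out the a.s.\ limit $\sqrt{3}/\pi$. The main technical obstacle is step (iii) of the $t_{0,n}$ upper bound: controlling the spatial distribution of endpoints of near-optimal boundary geodesics on the midline and exhibiting a zero-passage-time connection between two typical endpoints. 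This will combine RSW, Russo's formula, and half-plane arm-exponent estimates adapted to the strip geometry, in the spirit of the techniques underlying Theorem \ref{thmmain}.
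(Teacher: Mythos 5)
Your overall reduction is the same as the paper's: the paper proves the $s_{0,n}$ statements by showing $E|c_n^+-s_{0,n}|$ and $E|c_n^+-s_{0,n}|^2$ are bounded by a constant and then transfers Theorem \ref{thmmain}, referring to the proof of Theorem 1.1 in \cite{Yao16} (itself in the spirit of \cite{KZ97}) for the details, including the treatment of $t_{0,n}$ as essentially a sum of two independent half-plane contributions near the two endpoints. Your $s_{0,n}$ part (comparison in $L^2$, monotonicity $s_{0,n}\le s_{0,n+1}$, geometric subsequence) and your lower bound for $t_{0,n}$ via the midline split with independent sub-strips are consistent with that route. However, two steps of your plan have genuine gaps. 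First, your upper bound for $t_{0,n}$ rests on the claim that endpoints of near-optimal point-to-line paths ``concentrate in polylogarithmic windows of the midline.'' There is no reason for this: at criticality a near-optimal path can drift horizontally on scale $n$ at zero cost (large blue clusters exist at every scale with uniformly positive probability), so the hitting point on $H_{\lfloor n/2\rfloor}$ fluctuates macroscopically. Fortunately this claim is also unnecessary: the standard argument glues a near-optimal path from $(0,0)$ to the lower half-circle of radius $\sim n/4$ to a near-optimal path into $(0,\sqrt{3}n)$ through the macroscopic middle region, and bounds the connector cost by the maximal number of disjoint yellow blocking crossings of that region, which has exponential tails uniformly in $n$ (RSW, as in Lemma \ref{lemld}); this yields $E|t_{0,n}-X_n-Y_n|^2=O(1)$ with $X_n,Y_n$ independent, from which the mean, variance and in-probability statements follow.

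Second, your mechanism for ruling out a.s.\ convergence of $t_{0,n}/\log n$ does not work as described. An event controlled by the monochromatic one-arm exponent $5/48$ near the far endpoint $(0,\sqrt{3}n_k)$ forces only $O(1)$ additional passage time (a path need only cross a bounded number of extra yellow sites to reach a point attached to one long yellow arm), so such events cannot inflate $t_{0,n_k}$ by a positive fraction of $\log n_k$, and summing $n_k^{-5/48}$-type probabilities proves nothing about the ratio. What is needed (and what the argument cited via \cite{Yao16} uses) is a moderate-deviation event for the \emph{circuit count}: the number of disjoint yellow half-circuits surrounding $(0,\sqrt{3}n_k)$ below $H_{n_k}$ within a window of radius $n_k^{1-\eta}$ exceeds its typical value by a small multiplicative factor. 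By the i.i.d.\ renewal structure behind Theorem \ref{thmmain} this has probability of order $n_k^{-c(\epsilon,\eta)}$ with $c(\epsilon,\eta)\to 0$ as $\epsilon\to 0$; choosing $n_k$ polynomially spaced so that these windows are disjoint (hence the events independent) makes $\sum_k P(E_k)=\infty$, and Borel--Cantelli II together with the a.s.\ typical behaviour near the origin forces $\limsup_n t_{0,n}/\log n\ge \sqrt{3}/\pi+\delta$ a.s. Note also that $s_{0,n}$ escapes this obstruction precisely because its ``far'' endpoint is a whole line, which is why it converges a.s.\ while $t_{0,n}$ does not; your write-up should make that contrast explicit rather than attributing it only to non-monotonicity.
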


Our proof strategy for Theorem \ref{thmmain} is analogous to that of \cite{Yao16} where the first-passage time from the center of a disk to its boundary is studied.  Using a color switching technique we obtain that the first-passage time in a half-annulus has the same distribution as the number of (cluster) interface half-loops surrounding 0 in the half-annulus under a monochromatic boundary condition.  The well-known result that the percolation chordal exploration path converges weakly to chordal SLE$_6$ (see \cite{Smi01} and \cite{CN07}) tells us that those discrete interface half-loops converges weakly to the corresponding half-loops in the continuum. Then we use SLE techniques from \cite{Law15} to compute the distribution of the ``conformal radii" of the nested half-loops surrounding a fixed point. Surprisingly, this distribution is related to the distribution of conformal radii of CLE$_{24/5}$ whose moment generating function is derived in \cite{SSW09}.  This allows us to obtain explicit limit theorems for the half-loops in the continuum. Then the limit results for $c_n^+$ and $E[c_n^+]$ follow easily. Using Hongler and Smirnov's formula for the expected number of clusters in a rectangle \cite{HS11}, we give an alternative and more straightforward proof (using no SLE techniques) of the limit results for $c_n^+$ and $E[c_n^+]$.  Let us mention that it is possible to use the SLE techniques in Section 4.3 of \cite{Cur15} to give a third proof. In order to prove the limit result for Var$(c_n^+)$, we use a martingale method from \cite{KZ97}.

The organization of the paper is as follows. In Section \ref{secpre}, we give some definitions and relate the first-passage time in an annulus to the number of interface half-loops. In Section \ref{secconf}, we compute the exact distribution of ``conformal radii'' for the interface half-loops in the continuum. In Sections \ref{secfirst} and \ref{secsec}, we present two different proofs for the a.s. and $L^1$ convergences of $c_n^+/\log n$. In Section \ref{secvar}, we show the convergence of $\mathrm{Var}(c_n^+)/\log n$. In the last section, we complete the proofs of Theorem \ref{thmmain}, Corollary \ref{cormain}, Propositions \ref{propbb} and~\ref{propcyl}.

\section{Preliminaries}\label{secpre}
\subsection{Definitions and some discrete results}
Recall that $\mathcal{T}=(\mathbf{V},\mathbf{E})$ is the two-dimensional triangular lattice and  $\mathcal{H}=(\mathbf{V}_d,\mathbf{E}_d)$ is its dual.  The critical site percolation on $\mathcal{T}$ is an assignment of $0$ (equivalently, blue) or $1$ (equivalently, yellow) to each site of $\mathcal{T}$ (i.e., to each hexagon of $\mathcal{H}$). We denote the resulting probability space by $(\Omega,\mathscr{F},P)$ where $\Omega=\{0,1\}^{\mathbf{V}}$, and write $E$ for the corresponding expectation. Two hexagons are neighbors if they share a common edge. So a path $(v_0,v_1,\cdots,v_n)$ in $\mathcal{T}$ corresponds to a path $(h_0,h_1,\cdots,h_n)$ in $\mathcal{H}$ such that each $v_i$ lies at the center of the hexagon $h_i$ for each $0\leq i\leq n$. A path is called a \textbf{circuit} if its first and last sites (or hexagons) are neighbors.

We denote by $\mathbb{D}$ the unit disk in $\mathbb{C}$ centered at $0$. Define $\mathbb{H}$ to be the upper half-plane, i.e., $\mathbb{H}=\{z\in\mathbb{C}:\text{Im}(z)>0\}$. Let $\mathbb{D}^+:=\mathbb{D}\cap\mathbb{H}$ be the upper half unit disk. For $r>0$, let $\mathbb{D}_r^+:=r\mathbb{D}^+$ be the upper half-disk of radius $r$ centered at $0$. For $v\in\mathbf{V}$, denote by $B^+(v,r)$ the smallest connected domain of hexagons (in $\mathcal{H}$) which contains $\overline{v+\mathbb{D}_r^+}$. For $1\leq r<R$, let $A^+(r,R)$ be the discrete half-annulus centered at $0$ with inner radius $r$ and outer radius $R$. More precisely,
\[A^+(r,R):=B^+(0,R)\setminus B^+(0,r).\]
Let $\Delta A^+(r,R)$ be the \textbf{external site boundary} of $A^+(r,R)$, i.e., the set of hexagons that do not belong to $A^+(r,R)$ but are adjacent to hexagons in $A^+(r,R)$. $\Delta A^+(r,R)$ contains two paths of hexagons which lie under the $x$-axis (see Figure \ref{fig2}). We denote the left path by $\Delta_l A^+(r,R)$ and the right one by $\Delta_r A^+(r,R)$. The set of hexagons that are in $\Delta A^+(r,R)\setminus\{\Delta_l A^+(r,R)\cup \Delta_r A^+(r,R)\}$ and intersect $\mathbb{D}_r^+$ is denoted by $\Delta_i A^+(r,R)$, and $\Delta A^+(r,R)\setminus \{\Delta_l A^+(r,R)\cup \Delta_r A^+(r,R)\cup \Delta_i A^+(r,R)\}$ is denoted by $\Delta_o A^+(r,R)$. A path $(v_0,v_1,\cdots,v_n)$ in $A^+(r,R)$ is called a \textbf{half-circuit surrounding $0$} if $v_0$ has a neighbor in $\Delta_lA^+(r,R)$ and $v_n$ has a neighbor in $\Delta_rA^+(r,R)$. A \textbf{percolation cluster} is a maximal, connected and monochromatic subset of $\mathcal{T}$ (or $\mathcal{H}$). An \textbf{interface path} is a sequence $(e_0,e_1,\cdots,e_n)$ of distinct edges of $\mathcal{H}$ which belongs to the boundary of a cluster with $e_{i-1}$ and $e_i$ sharing a vertex of $\mathcal{H}$ for each $i=1,\cdots,n$. An \textbf{interface half-loop surrounding $0$} is defined in the obvious way. Define
\begin{align*}
&\rho^+(r,R):=\text{ the maximum number of disjoint yellow half-circuits surrounding } 0 \\
&\hspace{0.85in}\text{ in } A^+(r,R),\\
&N^+(r,R):=\text{ the number of interface half-loops surrounding }  0 \text{ in } A^+(r,R),\\
&T^+(r,R):=\inf\{T(\gamma): \gamma\in A^+(r,R), \text{ the first site of }\gamma \text{ has a neighbor in }\Delta_iA^+(r,R) \\
&\hspace{1.1in}\text{ and the last site of }\gamma \text{ has a neighbor in }\Delta_oA^+(r,R)\}.
\end{align*}
As Proposition 2.4 in \cite{Yao16}, we have
\begin{proposition}\label{propeqi}
Suppose $1\leq r<R$. Then we have
\begin{itemize}
\item
\[T^+(r,R)=\rho^+(r,R).\]
\item
Assume hexagons in $\Delta_oA^+(r,R)$ are blue. Then $T^+(r,R)$ has the same distribution as $N^+(r,R)$.
\end{itemize}
\end{proposition}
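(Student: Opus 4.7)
I would prove Proposition \ref{propeqi} following the template of Proposition 2.4 in \cite{Yao16}, with the straightforward adaptations needed to pass from the full annulus to the half-annulus.

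For the first statement $T^+(r,R)=\rho^+(r,R)$, the plan is a planar max-flow / min-cut duality. The inequality $T^+\geq\rho^+$ is a direct pigeonhole argument: any path from a neighbor of $\Delta_i A^+(r,R)$ to a neighbor of $\Delta_o A^+(r,R)$ in the half-annulus must cross each yellow half-circuit surrounding $0$ (since every such half-circuit is a topological cut between $\Delta_i A^+$ and $\Delta_o A^+$ in the planar half-annular region), and disjoint yellow half-circuits force distinct yellow sites on the crossing path. For the reverse inequality $T^+\leq\rho^+$, I would argue by induction on $T^+$: by planar self-duality (yellow paths from $\Delta_l$ to $\Delta_r$ are dual to blue paths from $\Delta_i$ to $\Delta_o$), an outermost yellow half-circuit $C_1$ exists iff $T^+\geq 1$; then apply the inductive hypothesis to the sub-half-annulus strictly inside $C_1$, in which the first-passage time has dropped to $T^+-1$. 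Peeling outputs the required $T^+$ disjoint yellow half-circuits.

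For the second statement, I would first observe that $\rho^+=T^+$ is measurable with respect to the sites strictly inside $A^+(r,R)$, so its distribution is unchanged by conditioning $\Delta_o A^+(r,R)$ to be blue; it therefore suffices to establish $\rho^+\stackrel{d}{=}N^+$ under the monochromatic blue outer boundary. The plan is to use a color-switching / coupling argument. Under the blue condition, enumerate the interface half-loops $L_1\supset L_2\supset\cdots\supset L_{N^+}$ surrounding $0$ from outside inwards; $L_1$ exists iff the outermost yellow cluster $Y_1$ surrounding $0$ exists iff $\rho^+\geq 1$, giving the base case. Conditional on $Y_1$ and its inner interface $L_2$, the sites strictly inside $L_2$ form an independent critical configuration with a forced blue outer layer (the sites adjacent to $L_2$ from the inside), so the recursion applies to a half-annular region with the same monochromatic blue outer boundary and yields the inductive step.

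The main obstacle is making this color-switching step precise, because a single yellow cluster $Y_1$ may contain several disjoint yellow half-circuits (contributing its ``width'' to $\rho^+$) while producing only two interface half-loops, so matching the two distributions is not a simple pointwise identity but requires balancing over configurations. The key ingredient is the $p=1/2$ symmetry of critical Bernoulli percolation, which allows one to convert between yellow and blue structures while preserving the underlying law, together with the domain Markov property at interfaces that legitimizes the inductive peeling. This is exactly the structure of the argument in \cite{Yao16} for the full annulus, and the passage to the half-annulus amounts only to technical adjustments at the lower boundary arcs $\Delta_l A^+(r,R)$ and $\Delta_r A^+(r,R)$.
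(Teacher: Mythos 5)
Your treatment of the first item is fine and matches the standard argument the paper points to (path-crossing/pigeonhole for $T^+\geq\rho^+$ and a duality-plus-peeling induction for the reverse inequality). The problem is the second item. Your inductive scheme conditions on the outermost yellow cluster $Y_1$ surrounding $0$ and its inner interface $L_2$, and then recurses inside $L_2$. But the counts do not match step by step: $Y_1$ contributes $w(Y_1)\geq 1$ disjoint yellow half-circuits to $\rho^+$ (its ``width''), while it contributes at most $2$ interface half-loops to $N^+$, and moreover the conditional law of the region inside $L_2$ (its shape and size) depends on $Y_1$, so you cannot simply add an induction hypothesis for the inside to a marginal statement about $Y_1$. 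You name this obstacle yourself, but then resolve it only by invoking ``$p=1/2$ symmetry'' and ``balancing over configurations'' --- that balancing \emph{is} the theorem, so as written the proof has a genuine gap at its crucial step. It is also not the structure of the argument in \cite{Yao16}: that argument (and the paper's proof here) is not an inductive peeling with a domain Markov property, but a single global, explicitly constructed bijection between the events $\{\rho^+(r,R)=n\}$ and $\{N^+(r,R)=n\}$.

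Concretely, the paper labels the disjoint yellow half-circuits surrounding $0$ from outside to inside as $\mathcal{C}_1,\dots,\mathcal{C}_n$ and switches the colors of the even-indexed half-circuits $\mathcal{C}_2,\mathcal{C}_4,\dots$ (together with the hexagons between $\Delta_iA^+(r,R)$ and $\mathcal{C}_n$ when $n$ is odd); this alone maps $\{\rho^+=n\}$ into $\{N^+=n\}$ but is not injective, so one additionally flips the hexagons lying strictly between consecutive half-circuits $\mathcal{C}_{2j-1}$ and $\mathcal{C}_{2j}$. The enlarged switching is a bijection, and since at $p=1/2$ every configuration of the finitely many relevant hexagons has the same probability, a bijection between the two events immediately gives $P(\rho^+=n)=P(N^+=n)$ for every $n$, i.e., the distributional identity (after your correct preliminary remark that the law of $\rho^+=T^+$ is unaffected by fixing $\Delta_oA^+(r,R)$ blue). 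To repair your proof you should either reproduce such an explicit measure-preserving bijection (verifying injectivity, which is exactly what the extra ``between-circuit'' flips are for) or replace the hand-waved balancing step by an actual computation; as it stands, the inductive coupling you describe would not close.
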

\begin{proof}
The proof is similar to that of Proposition 2.4 in \cite{Yao16}. Since the proof of the first item is standard (see, e.g., (2.39) in \cite{KZ97}), we describe the main idea for the proof of the second item. The key idea is to find a bijection between $\{\rho^+(r,R)=n\}$ and $\{N^+(r,R)=n\}$ for any $n\in\mathbb{N}\cup\{0\}$. The case when $n=0$ is trivial since the bijection is just the identity map. So we may just assume $n\geq 1$. Let $\omega\in\{\rho^+(r,R)=n\}$. We label all disjoint yellow half-circuits surrounding $0$ in $A^+(r,R)$ from outside to inside by $\mathcal{C}_1(\omega)$, $\mathcal{C}_2(\omega)$,$\cdots$, $\mathcal{C}_n(\omega)$. More precisely, $\mathcal{C}_1(\omega)$ is the outermost yellow half-circuit surrounding $0$ in $A^+(r,R)$, and $\mathcal{C}_2(\omega)$ is the outermost yellow half-circuit surrounding $0$ in the connected component of $A^+(r,R)\setminus\mathcal{C}_1(\omega)$ which is adjacent to $\Delta_iA^+(r,R)$, and so on. If $n$ is odd we flip the colors of hexagons in $\mathcal{C}_2(\omega)$, $\mathcal{C}_4(\omega)$, $\cdots$, $\mathcal{C}_{n-1}(\omega)$ and hexagons lying between $\Delta_iA^+(r,R)$ and $\mathcal{C}_n(\omega)$; if $n$ is even we flip the colors of hexagons in $\mathcal{C}_2(\omega)$, $\mathcal{C}_4(\omega)$, $\cdots$, $\mathcal{C}_{n}(\omega)$. It is easy to see that the resulting new configuration from the described color switching is in $\{N^+(r,R)=n\}$. But unfortunately, the map coming from such a color switching is not one-to-one and thus not a bijection. It turns out that one needs to flip the colors of more hexagons to construct a bijection. Namely, when $n$ is odd, besides switching the colors of hexagons in $\mathcal{C}_2(\omega)$, $\mathcal{C}_4(\omega)$, $\cdots$, $\mathcal{C}_{n-1}(\omega)$ and hexagons lying strictly between $\Delta_iA^+(r,R)$ and $\mathcal{C}_n(\omega)$, we also switch the colors of hexagons lying strictly between $\mathcal{C}_1(\omega)$ and $\mathcal{C}_2(\omega)$, $\mathcal{C}_3(\omega)$ and $\mathcal{C}_4(\omega)$,$\cdots$, $\mathcal{C}_{n-2}(\omega)$ and $\mathcal{C}_{n-1}(\omega)$; When $n$ is even, besides switching the colors of hexagons in $\mathcal{C}_2(\omega)$, $\mathcal{C}_4(\omega)$, $\cdots$, $\mathcal{C}_{n}(\omega)$, we also switch the colors of hexagons lying between $\mathcal{C}_1(\omega)$ and $\mathcal{C}_2(\omega)$, $\mathcal{C}_3(\omega)$ and $\mathcal{C}_4(\omega)$,$\cdots$, $\mathcal{C}_{n-1}(\omega)$ and $\mathcal{C}_{n}(\omega)$. One can check the map coming from this new color switching is a bijection. See Figure \ref{fig2} for an example. We refer the reader to \cite{Yao16} for more details.
\end{proof}

\begin{figure}
\begin{center}
\includegraphics[height=0.23\textwidth]{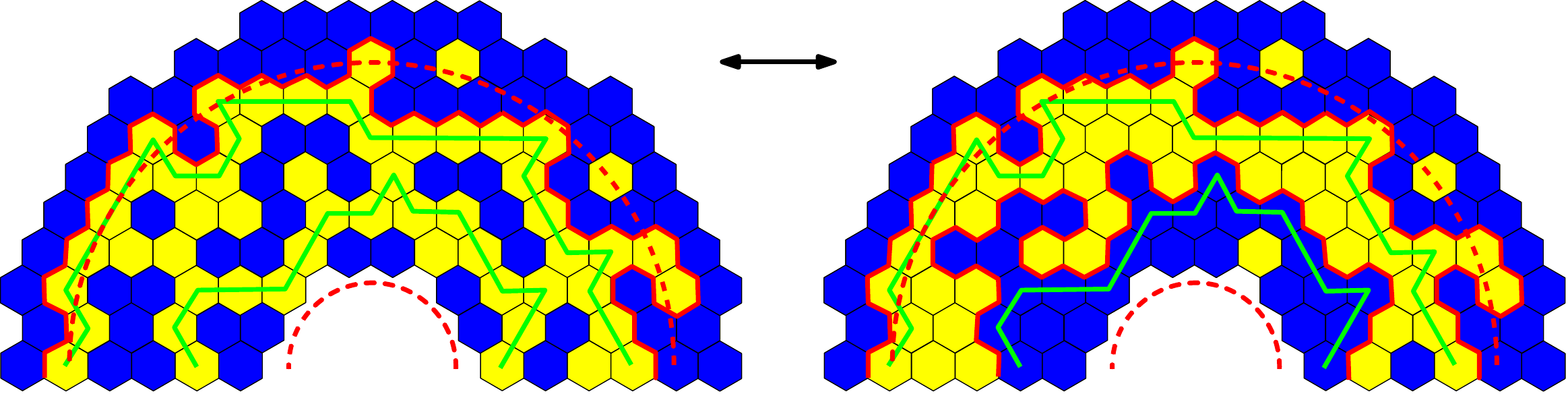}
\caption{Color switching. The left annulus has two disjoint yellow half-circuits surrounding $0$ while the right one has two disjoint interface half-loops surrounding $0$.}\label{fig2}
\end{center}
\end{figure}

The following lemma is a large deviation bound for $T^{+}(r,R)$.
\begin{lemma}\label{lemld}
There exist constants $C_1,C_2>0$ and $K>1$ such that for all $1\leq r<R$ and $x>K\log_2(R/r)$,
\[P(T^+(r,R)\geq x)\leq C_1e^{-C_2x}.\]
\end{lemma}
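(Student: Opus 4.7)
The plan is to reduce the lemma to a statement about yellow half-circuits and then apply a standard dyadic decomposition combined with a BK/RSW argument.

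First I would invoke Proposition~\ref{propeqi} to replace $T^{+}(r,R)$ by $\rho^{+}(r,R)$, the maximum number of disjoint yellow half-circuits surrounding $0$ in the half-annulus $A^{+}(r,R)$. This turns a statement about infima of passage times into a statement about the cardinality of a disjoint family of monochromatic crossings, which is the form directly accessible to BK-type tools.

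Next I would carry out a dyadic decomposition. Set $m:=\lfloor \log_{2}(R/r)\rfloor$ and let $r_{i}:=2^{i}r$ for $i=0,1,\dots,m$ (taking $r_{m}=R$ for the last one, possibly with a constant adjustment). Writing
\[
A^{+}(r,R)\;=\;\bigcup_{i=0}^{m-1} A^{+}(r_{i},r_{i+1}),
\]
every disjoint yellow half-circuit surrounding $0$ in $A^{+}(r,R)$ lies in at least one of the sub-annuli, so
\[
\rho^{+}(r,R)\;\le\;\sum_{i=0}^{m-1}\rho^{+}(r_{i},r_{i+1})\;=:\;\sum_{i=0}^{m-1}N_{i}.
\]
Crucially, the $N_{i}$ depend on disjoint sets of hexagons, hence are independent.

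The core estimate is a uniform geometric tail for each $N_{i}$: there exists $p\in(0,1)$, not depending on $r$ or $i$, such that $P(N_{i}\ge j)\le p^{j}$ for every $j\ge 1$. This follows from half-plane RSW on the triangular lattice, which guarantees a uniform positive probability (depending only on the dyadic aspect ratio~$2$) that a blue path crosses $A^{+}(r_{i},r_{i+1})$ from $\Delta_{i}$ to $\Delta_{o}$; such a blue crossing precludes any yellow half-circuit. Combining this with the van den Berg--Kesten inequality applied to the events that disjoint yellow half-circuits exist gives the claimed geometric bound $P(N_{i}\ge j)\le p^{j}$ (the events "there are $j$ disjoint yellow half-circuits" decompose into a disjoint occurrence of $j$ copies of the "yellow half-circuit" event, each of probability at most $1-q$ for a uniform $q>0$).

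Finally I would combine these ingredients via Chernoff. The uniform tail bound yields $E[e^{\lambda N_{i}}]\le M$ for some $M<\infty$ and some small $\lambda>0$, uniformly in $i$. By independence,
\[
E\bigl[e^{\lambda \rho^{+}(r,R)}\bigr]\;\le\;\prod_{i=0}^{m-1}E[e^{\lambda N_{i}}]\;\le\;M^{m}.
\]
Markov's inequality then gives
\[
P\bigl(T^{+}(r,R)\ge x\bigr)\;\le\;e^{-\lambda x}M^{m}\;\le\;\exp\!\bigl(-\lambda x+(\log M)\log_{2}(R/r)\bigr),
\]
and choosing $K>2\log M/\lambda$ yields $P(T^{+}(r,R)\ge x)\le e^{-(\lambda/2)x}$ for all $x>K\log_{2}(R/r)$, which is the claimed bound.

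The main obstacle is the uniform geometric tail $P(N_{i}\ge j)\le p^{j}$ in a single dyadic half-annulus. The statement itself is classical, but one needs to be careful at the boundary: the RSW lower bound on the probability of a blue crossing between the inner and outer arcs of a half-annulus must be uniform in $r$, and must accommodate the discrete boundary effects encoded by $\Delta_{l}A^{+}$, $\Delta_{r}A^{+}$, $\Delta_{i}A^{+}$, $\Delta_{o}A^{+}$. For the triangular lattice this half-plane RSW is well established, so the remaining work is bookkeeping and a clean BK application; I do not expect any surprises there.
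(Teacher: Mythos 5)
Your single-scale and final steps are fine: BK plus half-plane RSW does give a uniform geometric tail for the number of disjoint yellow half-circuits surrounding $0$ in one dyadic half-annulus, and the Chernoff balance $e^{-\lambda x}M^{m}$ against $x>K\log_{2}(R/r)$ is exactly how the number of scales gets absorbed (this is also the spirit of the argument in \cite{Yao14} that the paper points to). The gap is in the reduction that feeds these steps: the claim that every yellow half-circuit surrounding $0$ in $A^{+}(r,R)$ lies in some single sub-annulus $A^{+}(r_{i},r_{i+1})$ is false, and so is the resulting inequality $\rho^{+}(r,R)\le\sum_{i}\rho^{+}(r_{i},r_{i+1})$. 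The endpoints of a half-circuit only need to be adjacent to $\Delta_{l}A^{+}(r,R)$ and $\Delta_{r}A^{+}(r,R)$, which run along the $x$-axis through \emph{all} radii between $r$ and $R$; a half-circuit may therefore be anchored near radius $2r$ on both sides of the $x$-axis and arch up to radius close to $R$, spanning every dyadic scale. Coloring yellow a single arc of this shape (everything else blue) gives $\rho^{+}(r,R)=1$ while $\rho^{+}(r_{i},r_{i+1})=0$ for every $i$: in $A^{+}(r_{0},r_{1})$ the arc leaves two stubs, one on each side, that are not connected within that sub-annulus, and in the higher sub-annuli the arc never touches the bottom boundary. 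Nesting many such arcs makes the discrepancy arbitrarily large, so the decomposition, and with it the independence of your $N_{i}$, never gets off the ground.

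The standard repair keeps your architecture but accounts for circuits spanning several scales. Classify each yellow half-circuit by its radial extent: either it is contained in a doubled half-annulus $A^{+}(r_{i},r_{i+2})$, or its radial extent covers a full dyadic scale, in which case it contains a yellow radial crossing of that $A^{+}(r_{i},r_{i+1})$ (the piece of the circuit between its last exit from radius $r_{i}$ and first entry to radius $r_{i+1}$). Hence $\rho^{+}(r,R)$ is at most the sum over scales of (i) the number of disjoint yellow half-circuits surrounding $0$ inside a doubled half-annulus and (ii) the number of disjoint yellow radial crossings of a single half-annulus; both counts have uniform geometric tails by your BK$+$RSW argument (for (i) the blocking event is a blue radial crossing of the doubled half-annulus, for (ii) a blue half-circuit of the single one). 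The doubled half-annuli overlap, so split the scales into even and odd indices to get two independent families and run your Chernoff bound on each, with $m$ replaced by a constant multiple of $\log_{2}(R/r)$; choosing $K$ large then gives the stated bound. (Alternatively one can bound $T^{+}(r,R)$ directly by chaining cheapest half-circuits in dyadic half-annuli with cheapest radial crossings of doubled half-annuli, in the Kesten--Zhang style; either way, the essential missing ingredient in your write-up is the treatment of circuits that cross several dyadic scales.)
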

\begin{proof}
The proof is similar to that of Corollary 2.3 in \cite{Yao14}.
\end{proof}

\subsection{Chordal SLE}
We give a brief introduction of chordal Schramm-Loewner evolution (SLE). Please refer to \cite{Law05} for more about SLE.
Let $(B_t)_{t\geq0}$ be a standard Brownian motion on $\mathbb{R}$ with $B_0=0$. Let $\kappa\geq 0$ and consider the solution to the chordal Loewner equation for the upper half plane,
\begin{equation}\label{eqloe}
\partial_tg_t(z)=\frac{2}{g_t(z)-\sqrt{\kappa}B_t},~~~g_0(z)=z, z\in\overline{\mathbb{H}}.
\end{equation}
This is well defined as long as $g_t(z)-\sqrt{\kappa}B_t\neq 0$, i.e., for all $t<T_z$, where $T_z:=\inf\{t\geq 0:g_t(z)-\sqrt{\kappa}B_t=0\}$. For each $t>0$, $g_t:\mathbb{H}\setminus K_t\rightarrow\mathbb{H}$ is a conformal map, where $K_t:=\{z\in\overline{\mathbb{H}}:T_z\leq t\}$ is a compact subset of $\overline{\mathbb{H}}$ such that $\mathbb{H}\setminus K_t$ is simply connected. It is known (see \cite{RS05}) that $\gamma(t):=g_t^{-1}(\sqrt{\kappa}B_t)$ exists and is continuous in $t$, and the curve $\gamma$ is called the \textbf{trace} of chordal SLE$_{\kappa}$. It is also proven in the same paper that $\gamma$ is simple if and only if $\kappa\in[0,4]$.

Let $D$ be a simply connected domain and $a,b$ be distinct points on $\partial D$. Let $f:\mathbb{H}\rightarrow D$ be a conformal map with $f(0)=a$ and $f(\infty)=b$. If $\gamma$ is the chordal SLE$_{\kappa}$ trace in $\overline{\mathbb{H}}$ from $0$ to $\infty$, then $f\circ\gamma$ defines the chordal SLE$_{\kappa}$ trace from $a$ to $b$ in $\overline{D}$.

\section{Conformal radii}\label{secconf}
In \cite{CN06}, the scaling limit of the interface loops of critical site percolation on $\mathcal{T}$ was constructed and proved. This scaling limit is the conformal loop ensemble (CLE) defined in \cite{She09} for $\kappa=6$. In \cite{SSW09}, the distribution of the conformal radii of the nested loops from CLE$_{\kappa}$ ($8/3\leq \kappa\leq 8$) in $\mathbb{D}$ surrounding $0$ was calculated. In this section, we are interested in the nested interface half-loops (in the scaling limit) surrounding $0$ in $\mathbb{D}^+$. We will compute the distribution of the ``conformal radii'' of those nested half-loops. Our derivation applies to any SLE$_{\kappa}$ where $\kappa>4$. But we can only obtain the explicit moment generating function for $\kappa=6$, which is what we need for this paper. For $\kappa>4$ and $\kappa\neq 6$, we show that it has the same distribution as the first hitting time of some SDE (see Remark \ref{remsde} below), which might be interesting in itself.

Let $D_0:=\mathbb{D}^+$ be the upper half unit disk. Let $l_0:=\inf\{x:x\in\partial D_0\cap\mathbb{R}\}=-1$ and $r_0:=\sup\{x:x\in\partial D_0\cap\mathbb{R}\}=1$. Suppose $\{\gamma(t),t\geq0\}$ is the chordal SLE$_6$ trace in $\overline{D}_0$ from $-1$ to $0$ and $\tau_0$ is the first time $t$ that $0$ and $r_0$ are in distinct components of $\overline{D}_0\setminus\gamma[0,t]$. Let $D_1$ be the connected component of $D_0\setminus\gamma[0,\tau_0]$ that contains $0$ as a boundary point. We inductively define $(D_{k+1},l_k,r_k)$ in the following way. For $k\in \mathbb{N}$, let $l_k:=\inf\{x:x\in\partial D_k\cap\mathbb{R}\}$ and $r_k:=\sup\{x:x\in\partial D_k\cap\mathbb{R}\}$. If $k$ is odd, then denote by $\tau_k$ the first time $t$ that $0$ and $l_k$ are in distinct components of $\overline{D}_k\setminus\gamma[\tau_{k-1},t]$; If $k$ is even, then denote by $\tau_k$ the first time $t$ that $0$ and $r_k$ are in distinct components of $\overline{D}_k\setminus\gamma[\tau_{k-1},t]$. Let $D_{k+1}$ be the connected component of $D_k\setminus\gamma[\tau_{k-1},\tau_k]$ that contains $0$ as a boundary point. If $D$ is a simply connected domain and $z\in D$, define $\text{CR}(D,z)$ to be the \textbf{conformal radius} of $D$ viewed from $z$, i.e., $\text{CR}(D,z)=|g^{\prime}(z)|^{-1}$ where $g$ is any conformal map from $D$ to the unit disk $\mathbb{D}$ that sends $z$ to $0$. For $k\in\mathbb{N}\cup\{0\}$, let $\tilde{D}_k$ be the reflected domain of $D_k$, that is,
\[\tilde{D}_k=\{z\in\mathbb{C}:z\in D_k \text{ or }\bar{z}\in D_k\}\cup(l_k,r_k).\]
Define
\[Z_k:=\log\text{CR}(\tilde{D}_{k-1},0)-\log\text{CR}(\tilde{D}_k,0),~k\in\mathbb{N}.\]

We denote by $\mathbb{P}$ the probability measure associated with the chordal SLE$_6$ in $D_0$, and $\mathbb{E}$ for the corresponding expectation.
Then we have
\begin{theorem}\label{thm2}
Suppose $\kappa=6$. The $Z_k$'s are i.i.d. random variables and
\[\mathbb{E}e^{\lambda Z_k}=\frac{\sqrt{3}}{2\cos(\pi \sqrt{1/36+2\lambda/3})},\]
where $\mathrm{Re}(\lambda)<1/3$.
\end{theorem}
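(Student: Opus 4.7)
The plan has two parts: establishing the i.i.d.\ assertion by conformal invariance and the chordal Markov property, and computing the MGF of $Z_1$ by an explicit $\text{SLE}_6$ diffusion analysis.

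For the i.i.d.\ step, I would use the strong Markov property of chordal $\text{SLE}_6$ at the stopping time $\tau_{k-1}$: conditionally on $\mathcal{F}_{\tau_{k-1}}$, the restarted trace is a chordal $\text{SLE}_6$ in the simply connected domain $D_k$ from the tip $\gamma(\tau_{k-1})$ to the target $0$. Let $\psi_k: D_k \to \mathbb{D}^+$ be the unique conformal isomorphism with $\psi_k(0) = 0$, $\psi_k(\gamma(\tau_{k-1})) \in \{-1, +1\}$, and the parity-dependent matching $\psi_k(l_k)$ or $\psi_k(r_k) \in \{+1, -1\}$ chosen so that the next separation condition ($0$ from $\psi_k(l_k)$ or $\psi_k(r_k)$) coincides with the original definition used for $\tau_0$. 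By the reflection $z \mapsto -z$ of $\mathbb{D}^+$ both sign choices reduce to the same law, and by conformal invariance of $\text{SLE}_6$ the pulled-back trace is an exact copy of the original process. Conformal radius transforms multiplicatively under $\psi_k$, so the increment $Z_{k+1}$ is a measurable function of the future of $\gamma$ past $\tau_{k-1}$, hence independent of $\mathcal{F}_{\tau_{k-1}}$ and equidistributed with $Z_1$.

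For the MGF, transfer to $\mathbb{H}$ via a conformal map $\Phi: \mathbb{D}^+ \to \mathbb{H}$ sending $(-1, 0, 1)$ to $(0, \infty, y_0)$ with $y_0 < 0$, so that the original process becomes standard chordal $\text{SLE}_6$ from $0$ to $\infty$ in $\mathbb{H}$ with driving $U_t = \sqrt{6}B_t$, hull $K_t$, and hydrodynamic map $g_t$. A direct computation using the expansion of $\tilde{\phi}_1 = \Phi \circ \phi_1 \circ \Phi^{-1}$ near $\infty$ together with the three boundary conditions that determine the conformal map $\phi_1: D_1 \to \mathbb{D}^+$ yields
\[
|\phi_1'(0)| = (R_T - L_T)/|y_0|,
\]
where $T$ is the swallow time of $y_0$ and $L_T, R_T$ are the real-axis limits of $g_T$ approaching the two sides of $K_T \cap \mathbb{R}$. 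Scale invariance of $\text{SLE}_6$ allows normalizing $|y_0| = 1$, so $Z_1 = \log(R_T - L_T)$. To compute $\mathbb{E}[(R_T - L_T)^\lambda]$, I would study the three-dimensional diffusion $(M_t, N_t, X_t) = (L_t - U_t, R_t - U_t, g_t(y_0) - U_t)$, each coordinate satisfying $dZ = (2/Z)\,dt - \sqrt{6}\,dB_t$ under the common Brownian $B$, stopped at $T = \inf\{t : X_t = 0\}$. Scale invariance forces any martingale observable reproducing $(R_T - L_T)^\lambda$ on the swallow set to factor appropriately, and after using the reflection symmetry $(m,n) \mapsto (-n, -m)$, the harmonicity equation reduces to a second-order linear ODE in a single scale-invariant variable. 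For $\kappa = 6$ this ODE admits trigonometric solutions; matching the boundary condition at the swallow set and handling the degenerate initial data $M_0 = N_0 = 0$ by force-point regularization ($M_0 = -\epsilon, N_0 = \epsilon, \epsilon \to 0$) yields the explicit MGF $\sqrt{3}/(2\cos(\pi\sqrt{1/36 + 2\lambda/3}))$, convergent for $\mathrm{Re}(\lambda) < 1/3$ where the cosine in the denominator first vanishes.

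The main obstacle will be the MGF step: identifying the correct martingale ansatz, passing to the degenerate initial condition in a controlled manner, and explicitly solving the resulting ODE with the right boundary behavior. This parallels the SSW09 calculation for nested $\text{CLE}_\kappa$ conformal radii but adapts it from the radial to the chordal setting, which is presumably the source of the connection with $\text{CLE}_{24/5}$ alluded to above.
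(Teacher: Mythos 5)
Your i.i.d.\ argument is sound and essentially the paper's: keeping the target at $0$ and using the conformal Markov property at $\tau_{k-1}$ (together with the fact that the tip $\gamma(\tau_{k-1})$ coincides with $r_k$ or $l_k$, which is what makes your three-point normalization of $\psi_k$ consistent) is a legitimate alternative to the paper's locality/re-targeting step, and the conformal-radius cancellation via the reflected extension of $\psi_k$ is exactly Lemma \ref{lemcon}. Your reformulation of the increment is also correct and is a genuinely different reduction from the paper's: sending the viewpoint $0\in\partial\mathbb{D}^+$ to $\infty$ does give $e^{Z_1}=(R_T-L_T)/|y_0|$, where $T$ is the swallowing time of $y_0$ and $L_T,R_T$ are the $g_T$-images of the two edges of the hull's real footprint (one checks this by expanding $\Phi$ near $0$, where the residue is $y_0/4$, and uniformizing the doubly-reflected complement of the hull by a Joukowski map).

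The gap is in the MGF computation, which is the heart of the theorem. In your coordinates the relevant state is three marked quantities $(M_t,N_t,X_t)=(L_t-U_t,R_t-U_t,g_t(y_0)-U_t)$ with $X_t<M_t\le 0\le N_t$; after removing scaling this is a genuinely two-variable boundary value problem (Dirichlet data $(n-m)^\lambda$ on $\{x=0\}$, reflecting conditions at $m=0$ and $n=0$), and your proposed collapse to a single scale-invariant variable via the symmetry $(m,n)\mapsto(-n,-m)$ does not work: that reflection moves the target point from the left of the hull to the right, so it is not a symmetry of this problem, and no one-variable ODE is identified, let alone solved, so the trigonometric formula $\sqrt{3}/(2\cos(\pi\sqrt{1/36+2\lambda/3}))$ is never actually derived (you yourself flag this step as the main obstacle). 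Note also that the SSW09-style time change by $\log(R_t-L_t)$ is not available here as a shortcut, because $R_0-L_0=0$ makes the clock start at $-\infty$. The paper's computation succeeds precisely because it chooses the opposite uniformization: the viewpoint is kept as an interior point ($0\mapsto 1$ under $f(z)=((1+z)/(1-z))^2$) and the semicircle is mapped onto $(-\infty,0]$, so only one hull-edge quantity $O_t$ enters; then $\Upsilon_t=Y_t/g_t'(1)$ is monotone, a time change making $\log\Upsilon_t$ linear leaves the single ratio $J_t=Y_t/X_t$ autonomous, the substitution $\hat J=(1+\cos(\theta/2))/2$ produces exactly the radial Bessel-type diffusion of \cite{SSW09} at $\kappa=24/5$, and the MGF of its hitting time of $2\pi$ is quoted from there (this is also where the restriction $\mathrm{Re}(\lambda)<1/3$ comes from). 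To complete your route you would either have to solve the two-variable degenerate PDE with mixed boundary conditions directly, or perform the M\"obius change of coordinates that puts the viewpoint in the interior --- at which point you are doing the paper's proof.
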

Theorem \ref{thm2} implies the following corollary immediately.
\begin{corollary}
Suppose $\kappa=6$. For $k\in\mathbb{N}$,
\[\mathbb{E}Z_k=\frac{2\sqrt{3}\pi}{3},~\mathrm{Var}(Z_k)=\frac{16\pi^2}{3}-8\sqrt{3}\pi.\]
\end{corollary}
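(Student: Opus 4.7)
The plan is to read off the first two moments directly from the moment generating function $M(\lambda) := \mathbb{E}e^{\lambda Z_k}$ supplied by Theorem \ref{thm2}. Because $M$ is analytic on the half-plane $\{\mathrm{Re}(\lambda)<1/3\}$, which contains an open neighborhood of the origin, all moments of $Z_k$ are finite and are given by $\mathbb{E}Z_k^n = M^{(n)}(0)$; in particular it suffices to compute $M'(0)$ and $M''(0)$ and then use $\mathrm{Var}(Z_k) = M''(0) - M'(0)^2$. As a sanity check, one first verifies $M(0)=1$: at $\lambda=0$ the argument of the cosine is $\pi/6$, $\cos(\pi/6)=\sqrt{3}/2$, and the ratio is $1$.

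For the first derivative, I would set $u(\lambda) := \pi\sqrt{1/36 + 2\lambda/3}$, so that $M(\lambda) = (\sqrt{3}/2)\sec(u(\lambda))$, and record the elementary values $u(0) = \pi/6$, $u'(0) = 2\pi$, and $u''(0) = -24\pi$ (computed from $u'(\lambda) = (\pi/3)(1/36+2\lambda/3)^{-1/2}$). The chain rule gives $M'(\lambda) = (\sqrt{3}/2)\sec(u)\tan(u)\,u'(\lambda)$, and substituting $\sec(\pi/6) = 2/\sqrt{3}$ and $\tan(\pi/6) = 1/\sqrt{3}$ yields
\[
\mathbb{E}Z_k = M'(0) = \tfrac{\sqrt{3}}{2}\cdot\tfrac{2}{\sqrt{3}}\cdot\tfrac{1}{\sqrt{3}}\cdot 2\pi = \tfrac{2\sqrt{3}\pi}{3},
\]
matching the stated mean.

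For the second derivative I would differentiate once more, obtaining
\[
M''(\lambda) = \tfrac{\sqrt{3}}{2}\bigl\{[\sec^{3}(u) + \sec(u)\tan^{2}(u)]\,u'(\lambda)^{2} + \sec(u)\tan(u)\,u''(\lambda)\bigr\},
\]
and evaluate at $\lambda=0$ using the same trigonometric values, which gives $M''(0) = 20\pi^2/3 - 8\sqrt{3}\pi$. Subtracting $M'(0)^2 = 4\pi^2/3$ yields the stated variance $16\pi^2/3 - 8\sqrt{3}\pi$. There is no real obstacle here: the whole argument is a mechanical differentiation that exploits the explicit form supplied by Theorem \ref{thm2}, and the only thing to watch is the arithmetic simplification in $M''(0)$.
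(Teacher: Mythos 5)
Your proposal is correct and matches the paper's (implicit) argument: the paper deduces the corollary directly from Theorem \ref{thm2} by differentiating the moment generating function $\mathbb{E}e^{\lambda Z_k}=\frac{\sqrt{3}}{2\cos(\pi\sqrt{1/36+2\lambda/3})}$ at $\lambda=0$, exactly as you do, and your values $M'(0)=2\sqrt{3}\pi/3$ and $M''(0)=20\pi^2/3-8\sqrt{3}\pi$ check out. The justification for moment extraction is also sound, since the MGF is finite and analytic on a neighborhood of $0$ inside $\{\mathrm{Re}(\lambda)<1/3\}$.
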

The proof of Theorem \ref{thm2} relies on ideas from \cite{Law15} and \cite{SSW09}. We first introduce the following easy but useful lemma.
\begin{lemma}\label{lemcon}
If $\phi: D\rightarrow D^{\prime}$ is a conformal bijection and $z\in D$, then
\[\mathrm{CR}(D^{\prime}, \phi(z))=|\phi^{\prime}(z)|\mathrm{CR}(D,z).\]
\end{lemma}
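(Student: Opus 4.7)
The plan is to unwind the definition of conformal radius and apply the uniqueness of the Riemann map (up to rotation). Concretely, I would fix a conformal map $f : D \to \mathbb{D}$ with $f(z) = 0$, so that by definition $\mathrm{CR}(D, z) = |f'(z)|^{-1}$, and similarly fix a conformal map $F : D' \to \mathbb{D}$ with $F(\phi(z)) = 0$, so that $\mathrm{CR}(D', \phi(z)) = |F'(\phi(z))|^{-1}$. Since $\phi$ is a conformal bijection, the composition $F \circ \phi$ is another conformal bijection from $D$ onto $\mathbb{D}$ that sends $z$ to $0$.

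Next I would invoke the uniqueness statement for Riemann maps of a simply connected domain onto $\mathbb{D}$ with a prescribed preimage of $0$: any two such maps differ by a rotation of $\mathbb{D}$. Thus there is $\theta \in \mathbb{R}$ with $F(\phi(w)) = e^{i\theta} f(w)$ for all $w \in D$. Differentiating at $w = z$ and using the chain rule gives $F'(\phi(z))\, \phi'(z) = e^{i\theta} f'(z)$, and taking absolute values yields $|F'(\phi(z))|\,|\phi'(z)| = |f'(z)|$. Rearranging,
\[
\mathrm{CR}(D', \phi(z)) = |F'(\phi(z))|^{-1} = \frac{|\phi'(z)|}{|f'(z)|} = |\phi'(z)|\, \mathrm{CR}(D, z),
\]
as claimed.

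There is essentially no obstacle here: the only subtlety is to ensure that $D$ is simply connected so that $f$ and $F$ exist (which is assumed implicitly in the statement, since conformal radius is defined only for simply connected domains), and to note that $\phi'(z) \neq 0$ because $\phi$ is a conformal bijection, so the division above is legitimate. One could alternatively give a coordinate-free argument by pulling back the hyperbolic metric, but the Riemann-map computation above is the most direct.
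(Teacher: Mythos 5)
Your argument is correct and is essentially the paper's proof, which simply states that the lemma follows from the definition of conformal radius; you have just written out the routine details. Note that since the paper defines $\mathrm{CR}(D,z)=|g'(z)|^{-1}$ for \emph{any} conformal map $g:D\to\mathbb{D}$ with $g(z)=0$, you may skip the rotation-uniqueness step entirely and apply the definition directly to $F\circ\phi$.
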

\begin{proof}
The proof follows from the definition of conformal radius.
\end{proof}
\begin{proof}[Proof of Theorem \ref{thm2}]
For $k$ odd, by the domain Markov property and the locality property for SLE$_6$ (see, e.g., Proposition 6.14 in \cite{Law05}) , $\gamma[\tau_{k-1},\tau_k]$ has the same distribution as a chordal SLE$_6$ trace in $D_k$ from $r_k$ to $l_k$ stopped when it disconnects $0$ from $l_k$. Similarly, for $k$ even, $\gamma[\tau_{k-1},\tau_k]$ has the same distribution as a chordal SLE$_6$ trace in $D_k$ from $l_k$ to $r_k$ stopped when it disconnects $0$ from $r_k$.
That the $Z_k$'s are i.i.d. follows from Lemma \ref{lemcon} and the conformal invariance of chordal SLE$_6$ (by definition). So it suffices to prove that $Z_1$ has the right moment generating function. Note that $f(z)=(\frac{z+1}{1-z})^2$ is a conformal bijection from $D_0$ to $\mathbb{H}$ with $f(-1)=0,f(0)=1,f(1)=\infty$. So the proof of Theorem \ref{thm2} is completed if we can show the following proposition, since $\mathrm{CR}(\tilde{D}_1,0)=\mathrm{CR}(\tilde{U}_{T_1},1)/4$ where the latter is defined below.
\end{proof}
\begin{proposition}
Suppose $\{\gamma(t),t\geq 0\}$ is a chordal SLE$_6$ trace in $\overline{\mathbb{H}}$ from $0$ to $\infty$. For $z\in\overline{\mathbb{H}}$, let $T_z:=\inf\{t\geq 0:g_t(z)-\sqrt{6}B_t=0\}$. Let $F_t:=(-\infty,0]\cup\{z\in\overline{\mathbb{H}}:T_z\leq t\}$ and $\tilde{F}_t=\{z:z\in F_t\text{ or }\bar{z}\in F_t\}$. For $t<T_1$, define $\tilde{U}_t$ to be the connected component of $\mathbb{C}\setminus \tilde{F}_t$ that contains $1$. Define $\mathrm{CR}(\tilde{U}_{T_1},1):=\lim_{t\uparrow T_1}\mathrm{CR}(\tilde{U}_{t},1)$ (see the Remark \ref{rem2} below). Then we have
\[\mathbb{E}e^{-\lambda \log(\mathrm{CR}(\tilde{U}_{T_1},1)/4)}=\frac{\sqrt{3}}{2\cos(\pi \sqrt{1/36+2\lambda/3})},\]
where $\mathrm{Re}(\lambda)<1/3$.
\end{proposition}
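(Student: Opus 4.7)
The plan is to reduce the computation of $u_0:=\mathbb E[\exp(-\lambda\log(\mathrm{CR}(\tilde U_{T_1},1)/4))]$ to a first-exit functional of a one-dimensional diffusion, and then to a hypergeometric boundary value problem, following the spirit of the SLE computations in \cite{Law15} and \cite{SSW09}.

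First, by Schwarz reflection the Loewner map $g_t$ extends to a conformal bijection $\hat g_t:\tilde U_t\to\mathbb C\setminus(-\infty,\beta_t]$, where $\beta_t:=g_t(r_t^+)$ and $r_t$ is the rightmost point of $K_t\cap\mathbb R$. A short computation (via $z\mapsto i\sqrt{z-a}$ and $\mathrm{CR}(\mathbb H,iy)=2y$) gives $\mathrm{CR}(\mathbb C\setminus(-\infty,a],z)=4(z-a)$ for $z>a$, so Lemma~\ref{lemcon} yields $\mathrm{CR}(\tilde U_t,1)=4(g_t(1)-\beta_t)/g_t'(1)$. Setting $X_t=g_t(1)-\sqrt 6\,B_t$, $Y_t=\beta_t-\sqrt 6\,B_t$, and $R_t=Y_t/X_t\in[0,1)$, Loewner's equation gives (between swallowing events) $dX_t=2/X_t\,dt-\sqrt 6\,dB_t$ and $dY_t=2/Y_t\,dt-\sqrt 6\,dB_t$. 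Under the time change $d\tau=dt/X_t^2$, It\^o plus algebra produce the autonomous SDE
\[ dR_\tau=\frac{2(2R-1)(R-1)}{R}\,d\tau+\sqrt 6\,(R-1)\,d\tilde B_\tau. \]
Applying It\^o to $\log(\mathrm{CR}(\tilde U_t,1)/4)=\log X_t+\log(1-R_t)-\log g_t'(1)$ and using $d\log g_t'(1)=-2\,d\tau$, the stochastic terms cancel, leaving $d\log(\mathrm{CR}(\tilde U_\tau,1)/4)=(2-2/R_\tau)\,d\tau$. Since $t\to T_1$ corresponds to $\tau\to\infty$ with $R_\tau\to 1$, one obtains $\log(\mathrm{CR}(\tilde U_{T_1},1)/4)=\int_0^\infty(2-2/R_s)\,ds$.

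Next, the MGF $u(r):=\mathbb E_r[\exp(-\lambda\int_0^\infty(2-2/R_s)\,ds)]$ satisfies the Feynman--Kac ODE $\mathcal L u=\lambda(2-2/R)u$ where $\mathcal L$ is the generator of $R_\tau$. After dividing by the common factor $(R-1)$, this reduces to the hypergeometric equation $R(1-R)u''+\tfrac{2}{3}(1-2R)u'+\tfrac{2\lambda}{3}u=0$, with parameters $c=2/3$, $a+b=1/3$, $ab=-2\lambda/3$, giving $a,b=(1\mp\sqrt{1+24\lambda})/6$. Of the two local solutions at $R=0$ -- namely ${}_2F_1(a,b;2/3;R)$ and $R^{1/3}\,{}_2F_1(a+1/3,b+1/3;4/3;R)$ -- only the first is regular at the entrance boundary $R=0$ (where the drift $\sim 2/R$ repels), and the absorbing condition $u(1)=1$ then forces $u(R)={}_2F_1(a,b;2/3;R)/{}_2F_1(a,b;2/3;1)$. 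Gauss's theorem (valid since $c-a-b=1/3>0$) gives ${}_2F_1(a,b;2/3;1)=\Gamma(2/3)\Gamma(1/3)/[\Gamma(1/2+s)\Gamma(1/2-s)]$ with $s=\sqrt{1+24\lambda}/6=\sqrt{1/36+2\lambda/3}$. Using $\Gamma(1/3)\Gamma(2/3)=2\pi/\sqrt 3$ and Euler's reflection $\Gamma(1/2+s)\Gamma(1/2-s)=\pi/\cos(\pi s)$ yields
\[ u(0)=\frac{\sqrt 3}{2\cos(\pi\sqrt{1/36+2\lambda/3})}, \]
as claimed.

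The main obstacle is justifying the continuous-SDE analysis in the derivation of the pathwise formula for $\log(\mathrm{CR}/4)$. Strictly speaking, the geometric quantity $\beta_t=g_t(r_t^+)$ jumps at each swallowing event (when the SLE$_6$ trace hits $\mathbb R$ at a new point and a pocket is absorbed into the hull), so the SDE for $Y_t$ and the Itô computation above apply only between swallowings. The resolution is that in the $\tau$-parameterization these jumps correspond to the Bessel-type process $R_\tau$ (of effective dimension $5/3$) touching the entrance boundary $R=0$ and immediately leaving via instantaneous excursions, and the pathwise integral formula $\log(\mathrm{CR}(\tilde U_{T_1},1)/4)=\int_0^\infty(2-2/R_s)\,ds$ remains valid in the limit $\tau\to\infty$. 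Once this equivalence is established, the hypergeometric reduction and the closed-form evaluation are routine.
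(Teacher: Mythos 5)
Your proposal is correct and reaches the stated formula, but its second half takes a genuinely different route from the paper. Both arguments share the same geometric setup (following \cite{Law15}): with $x_t$ the rightmost point of the filled hull on $\mathbb{R}$, one has $\mathrm{CR}(\tilde U_t,1)=4\,(g_t(1)-g_t(x_t^+))/g_t'(1)$, and one studies the ratio process built from $X_t=g_t(1)-\sqrt6 B_t$ and $O_t=g_t(x_t^+)-\sqrt6 B_t$ (your $R_t=O_t/X_t$ is $1-J_t$ in the paper's notation). From there the paper time-changes so that $\Upsilon_t=\mathrm{CR}(\tilde U_t,1)/4=e^{-2t/5}$, substitutes $\hat J=(1+\cos(\theta/2))/2$, recognizes the radial Bessel-type SDE of \cite{SSW09} for $\kappa=24/5$, and quotes their moment generating function for the hitting time of $2\pi$, so that $-\log\Upsilon_{T_1}$ becomes a hitting time and all ODE work is outsourced. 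You instead time-change by $d\tau=dt/X_t^2$, derive the autonomous SDE for $R_\tau$, observe the cancellation of martingale terms in $d\log(\mathrm{CR}(\tilde U_t,1)/4)$ to get the additive-functional representation $\int_0^\infty(2-2/R_s)\,ds$, and evaluate its Laplace transform by Feynman--Kac, reducing to $R(1-R)u''+\tfrac23(1-2R)u'+\tfrac{2\lambda}{3}u=0$ and evaluating at $R=1$ via Gauss's theorem and the reflection formula. I checked your SDE for $R$, the drift computation giving $d\log(\mathrm{CR}/4)=(2-2/R)\,d\tau$, the hypergeometric parameters $c=2/3$, $a+b=1/3$, $ab=-2\lambda/3$, and the final evaluation; all are correct and agree with the paper. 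Your route is self-contained (no appeal to the CLE$_{24/5}$ conformal radius law), at the price of handling the boundary behavior of the diffusion yourself; the paper's route inherits exactly those delicacies from \cite{SSW09}.

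Two points in your justification should be corrected, though neither breaks the argument. First, $\beta_t=g_t(x_t^+)$ does not jump at swallowing events: it is $x_t$ that jumps, while $O_t=\beta_t-\sqrt6B_t$ is continuous and vanishes precisely when the trace hits a new rightmost real point; $O_t$ is a Bessel-type process of dimension $5/3$ instantaneously reflected at $0$, with $\int O_s^{-1}\,ds$ locally finite, so the ``main obstacle'' you describe is milder than stated. Second, and relatedly, $R=0$ is not an entrance boundary: it is an accessible, regular, instantaneously reflecting boundary (indeed $R_0=0$, and $R$ returns to $0$ with positive probability before $T_1$), which is consistent with your own excursion picture but not with the word ``entrance''. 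The correct selection rule is the reflecting (zero scale-derivative) condition at $0$: with scale $s(R)\asymp R^{1/3}$, the solution $R^{1/3}\,{}_2F_1(a+1/3,b+1/3;4/3;R)$ has nonvanishing $du/ds$ at $0^+$ and must be discarded, while ${}_2F_1(a,b;2/3;R)$ satisfies the condition; so your chosen solution is right, but for a different reason than the one you give. Finally, since $R=1$ is inaccessible in finite $\tau$-time (you need, as the paper also asserts without proof, that $J_t\to0$, i.e. $R_t\to1$, as $t\uparrow T_1$, and that the total $\tau$-time is infinite), the ``absorbing condition $u(1)=1$'' should be implemented by optional stopping and a convergence argument, e.g. first for $\mathrm{Re}\,\lambda\le 0$ and then by analytic continuation in $\lambda$ on $\{\mathrm{Re}\,\lambda<1/3\}$.
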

\begin{remark}\label{rem2}
For $t<T_1$, define the inradius of $\tilde{U}_t$ with respect to $1$ as $\mathrm{inrad}(\tilde{U}_t,1):=\inf\{|z-1|:z\notin\tilde{U}_t\}$. Then the Schwarz Lemma and the Koebe $1/4$ Theorem give
\[\mathrm{inrad}(\tilde{U}_t,1)\leq\mathrm{CR}(\tilde{U}_t,1)\leq 4\mathrm{inrad}(\tilde{U}_t,1).\]
The equation \eqref{eq21} below implies $\mathrm{CR}(\tilde{U}_t,1)$ is strictly increasing for $t<T_1$, so the limit $\lim_{t\uparrow T_1}\mathrm{CR}(\tilde{U}_{t},1)$ is well-defined and finite a.s.
\end{remark}
\begin{remark}\label{remsde}
For a general chordal SLE$_{\kappa}$ where $\kappa>4$, all quantities defined in the proposition and its proof are still well-defined. In this general case, \eqref{eq24} below becomes
\[d\theta_t=\frac{4}{5\sin(\theta_t/2)}\left[3-\frac{\kappa}{2}+\left(\frac{\kappa}{4}-1\right)\cos\left(\theta_t/2\right)\right]dt-\sqrt{\frac{4\kappa}{5}}dW_t, \text{ if } 0<\theta_t<2\pi.\]
Computing the exact distribution of the first hitting time of $2\pi$ for this process started at $0$ might be hard.
\end{remark}
\begin{proof}
As in the proof of Theorem 1 of \cite{Law15}, we define $x_t$ to be the rightmost point of $\tilde{F}_t\cap\mathbb{R}$ and $g_t(x_t)$ is defined by
\[g_t(x_t):=\inf\{g_t(x):x>0,T_x>t\}.\]
Let $X_t=g_t(1)-\sqrt{6}B_t$, $O_t=g_t(x_t)-\sqrt{6}B_t$, $Y_t=X_t-O_t$, $J_t=Y_t/X_t$, $\Upsilon_t=\mathrm{CR}(\tilde{U}_t,1)/4$. For $t<T_1$, it is not hard to find a conformal bijection between $\tilde{U}_t$ and $\mathbb{D}$ . Using such a conformal bijection one gets
\[\Upsilon_t=\frac{Y_t}{g_t^{\prime}(1)}.\]
By the Loewner equation \eqref{eqloe}, we have for $t<T_1$
\begin{eqnarray}
&&\partial_t\Upsilon_t=\frac{-2\Upsilon_tJ_t}{X_t^2(1-J_t)},\label{eq21}\\
&&dJ_t=\frac{J_t}{X_t^2}\left(4-\frac{2}{1-J_t}\right)dt+\frac{\sqrt{6}J_t}{X_t}dB_t.\label{eq22}
\end{eqnarray}
Define the random time change
\begin{equation}\label{eq23}
\sigma(t)=\inf\{s: s\geq 0,\Upsilon_s=e^{-2t/5}\}.
\end{equation}
We also define $\hat{\Upsilon}_t=\Upsilon_{\sigma(t)}=e^{-2t/5}$, $\hat{X}_t=X_{\sigma(t)}$, $\hat{J}_t=J_{\sigma(t)}$. The equation \eqref{eq21} and the chain rule imply
\[-\frac{2}{5}\hat{\Upsilon}_t=-\frac{2}{5}e^{-2t/5}=\partial_t\hat{\Upsilon}_t=\dot{\sigma}(t)\frac{-2\hat{\Upsilon}_t\hat{J}_t}{\hat{X}_t^2(1-\hat{J}_t)}.\]
Therefore,
\[\dot{\sigma}(t)=\frac{\hat{X}_t^2(1-\hat{J}_t)}{5\hat{J}_t}.\]
We change time in \eqref{eq22} to get
\[d\hat{J}_t=\frac{2-4\hat{J}_t}{5}dt+\sqrt{\frac{6\hat{J}_t(1-\hat{J}_t)}{5}}dW_t,\]
where $W_t=\int_0^{\sigma(t)}\frac{1}{\sqrt{\dot{\sigma}(\sigma^{-1}(s))}}dB_s$ is a standard Brownian motion. If we make the change of variables $\hat{J}_t=\frac{1+\cos(\theta_t/2)}{2}$, then It\^{o}'s formula implies
\begin{equation}\label{eq24}
d\theta_t=\frac{2}{5}\cot(\theta_t/2)dt-\sqrt{\frac{24}{5}}dW_t,\  \theta_0=0,\ \text{if } 0<\theta_t<2\pi.
\end{equation}
Note that $\theta_t$ behaves like a Bessel process, and it is reflected instantaneously at $0$ (reflected in the same way that the Bessel process is reflected). It is easy to see that $J_0=\hat{J}_0=1$ and $\lim_{t\uparrow T_1}J_t=0$. Let $\tau_0:=\inf\{s:\hat{J}_s=0\}$ and $S_{2\pi}:=\inf\{s:\theta_s=2\pi\}$. Then the definition of $\theta_t$ and \eqref{eq23} give
\begin{equation}\label{eq25}
\tau_0\overset{d}{=}S_{2\pi},\ \Upsilon_{T_1}\overset{d}{=}e^{-2\tau_0/5}.
\end{equation}
Our $\theta_t$ defined in \eqref{eq24} has the same distribution as $\theta_t$ for $\kappa=24/5$ defined in equation (6) of \cite{SSW09}. Hence Proposition 2 and the equation (3) from \cite{SSW09} say
\[\mathbb{E}e^{\lambda S_{2\pi}}=\frac{\sqrt{3}}{2\cos(\pi\sqrt{1/36+5\lambda/3})},~\mathrm{Re}(\lambda)<\frac{2}{15}.\]
The above displayed equation and \eqref{eq25} imply
\[\mathbb{E}e^{-\lambda \log(\mathrm{CR}(\tilde{U}_{T_1},1)/4)}=\mathbb{E}e^{-\lambda \log(\Upsilon_{T_1})}=\mathbb{E}e^{(2\lambda/5) S_{2\pi}}=\frac{\sqrt{3}}{2\cos(\pi \sqrt{1/36+2\lambda/3})},~\mathrm{Re}(\lambda)<\frac{1}{3},\]
which completes the proof of the proposition.
\end{proof}

\section{First proof of SLLN using conformal radii}\label{secfirst}
Our first proof of the strong law of large numbers (SLLN) for $c_n^+$ uses the conformal radii result that we proved in the last section. Recall the definition of $\tilde{D}_k$ in the previous section. For $\epsilon\in (0,1)$, we define
\[N(\epsilon):=\sup\{k:\overline{\mathbb{D}}_{\epsilon}\subseteq \tilde{D}_k\}.\]

Theorem \ref{thm2} and its corollary enable us to show the following
\begin{proposition}\label{propn}
\[\lim_{\epsilon\downarrow 0}\frac{N(\epsilon)}{-\log(\epsilon)}=\frac{\sqrt{3}}{2\pi} a.s.,~\lim_{\epsilon\downarrow 0}\frac{\mathbb{E}N(\epsilon)}{-\log(\epsilon)}=\frac{\sqrt{3}}{2\pi},~\lim_{\epsilon\downarrow0}\frac{\mathrm{Var}(N(\epsilon))}{-\log(\epsilon)}=\frac{2\sqrt{3}}{\pi}-\frac{9}{\pi^2}.\]
\end{proposition}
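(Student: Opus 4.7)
My plan is to reduce $N(\epsilon)$ to a level-crossing problem for the random walk built from the i.i.d.\ increments $Z_k$ of Theorem~\ref{thm2}, and then invoke classical renewal theory. Set $S_0:=0$ and $S_k:=\sum_{j=1}^{k}Z_j$. Since $\tilde{D}_0=\mathbb{D}$ has conformal radius $1$ at $0$, telescoping the definition of $Z_k$ gives $-\log\mathrm{CR}(\tilde{D}_k,0)=S_k$, so $(S_k)$ is a strictly increasing random walk with positive i.i.d.\ increments. Define the renewal counting process $M(\epsilon):=\sup\{k\geq 0: S_k\leq -\log\epsilon\}$. By the Schwarz lemma and the Koebe $1/4$ theorem, every simply connected $U\subsetneq\mathbb{C}$ containing $0$ satisfies $\mathrm{inrad}(U,0)\leq\mathrm{CR}(U,0)\leq 4\,\mathrm{inrad}(U,0)$; since $\overline{\mathbb{D}}_{\epsilon}\subseteq\tilde{D}_k$ iff $\mathrm{inrad}(\tilde{D}_k,0)\geq\epsilon$, this translates into the sandwich
\[M(4\epsilon)\leq N(\epsilon)\leq M(\epsilon).\]

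Next I apply renewal theory to $M(\epsilon)$. With $\mu:=\mathbb{E}Z_1=2\sqrt{3}\pi/3$ and $\sigma^2:=\mathrm{Var}(Z_1)=16\pi^2/3-8\sqrt{3}\pi$ from the corollary to Theorem~\ref{thm2}, the SLLN for renewal processes gives $M(\epsilon)/(-\log\epsilon)\to 1/\mu=\sqrt{3}/(2\pi)$ almost surely, the elementary renewal theorem yields the same limit for $\mathbb{E}M(\epsilon)/(-\log\epsilon)$, and the asymptotic variance formula gives $\mathrm{Var}(M(\epsilon))/(-\log\epsilon)\to\sigma^2/\mu^3$; the exponential moments furnished by Theorem~\ref{thm2} supply the uniform integrability needed for the $L^2$ statement. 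A direct check shows
\[\frac{\sigma^2}{\mu^3}=\frac{16\pi^2/3-8\sqrt{3}\pi}{(2\sqrt{3}\pi/3)^3}=\frac{2\sqrt{3}}{\pi}-\frac{9}{\pi^2},\]
matching the asserted variance constant.

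Transferring to $N(\epsilon)$ is routine for the almost sure and mean statements: the sandwich together with $-\log(4\epsilon)/(-\log\epsilon)\to 1$ forces both limits. For the variance, I will write $N(\epsilon)=M(\epsilon)-D(\epsilon)$ with $0\leq D(\epsilon)\leq G(\epsilon):=M(\epsilon)-M(4\epsilon)$, so that
\[\mathrm{Var}(N(\epsilon))=\mathrm{Var}(M(\epsilon))-2\,\mathrm{Cov}(M(\epsilon),D(\epsilon))+\mathrm{Var}(D(\epsilon)).\]
If $\mathbb{E}[G(\epsilon)^2]$ is uniformly bounded in $\epsilon$ then $\mathrm{Var}(D(\epsilon))=O(1)$, and Cauchy--Schwarz gives $|\mathrm{Cov}(M(\epsilon),D(\epsilon))|=O(\sqrt{-\log\epsilon})$, so both correction terms are $o(-\log\epsilon)$ and the variance limit is inherited from that of $M(\epsilon)$.

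The main obstacle is this uniform second-moment bound on $G(\epsilon)=\#\{k:S_k\in(-\log(4\epsilon),-\log\epsilon]\}$. It reduces to showing $\sup_{a\geq 0}\mathbb{E}\bigl[\#\{k:S_k\in(a,a+\log 4]\}^{2}\bigr]<\infty$, i.e.\ that the number of renewals in a window of fixed length has second moment bounded uniformly in the window's location. Since $Z_1$ has a continuous distribution with finite exponential moments, $p:=\mathbb{P}(Z_1>\log 4)>0$; conditioning on the first renewal index entering the window and invoking the strong Markov property then stochastically dominates the subsequent number of renewals in the window by a Geometric$(p)$ random variable, yielding the desired bound. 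Alternatively, one may appeal to Stone's decomposition of the renewal measure under the exponential moment hypothesis.
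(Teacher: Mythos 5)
Your proposal is correct and takes essentially the same route as the paper: the paper's proof likewise treats $-\log\mathrm{CR}(\tilde{D}_k,0)$ as a renewal process with the i.i.d.\ increments $Z_k$ of Theorem \ref{thm2} and invokes standard renewal theorems (SLLN, elementary renewal theorem, variance asymptotics), referring to the analogous argument in Proposition 3.2 of \cite{Yao16}. (A cosmetic point only: to make the sandwich deterministic rather than valid up to null events at the Koebe extremal/boundary case, bound $N(\epsilon)$ below by, say, $M(5\epsilon)$ instead of $M(4\epsilon)$; the asymptotics are unchanged.)
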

\begin{proof}
The proof uses some basic properties for renewal processes and a similar proof can be found in Proposition 3.2 of \cite{Yao16}.
\end{proof}
Next, we prove the scaling limit of $T^+(\tau r,\tau R)$ as $\tau\rightarrow\infty$.
\begin{proposition}\label{propnn}
Suppose $1\leq r<R$, $\tau\geq 1$ and $k\in\mathbb{N}$. Assume that hexagons in $\Delta_oA^+(\tau r,\tau R)$ are colored blue. We have
\[T^+(\tau r,\tau R)\rightarrow N(r/R) \text{ in distribution as } \tau\rightarrow\infty,\]
\[E(T^+\left(\tau r,\tau R\right)^k)\rightarrow \mathbb{E}(N(r/R)^k) \text{ as }\tau\rightarrow\infty.\]
\end{proposition}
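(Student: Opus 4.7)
The plan is to reduce $T^+(\tau r, \tau R)$ to the number of nested interface half-loops in a rescaled fixed domain and then take the scaling limit, comparing with the continuum construction of Section~\ref{secconf}.

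First, by Proposition~\ref{propeqi}, since $\Delta_o A^+(\tau r, \tau R)$ is colored blue, $T^+(\tau r, \tau R)$ has the same distribution as $N^+(\tau r, \tau R)$, the number of interface half-loops surrounding $0$ in the discrete half-annulus. Rescaling the triangular lattice by $1/\tau$ turns this into critical site percolation on a lattice of spacing $1/\tau$ inside a polygonal approximation of the fixed continuum half-annulus $A^+(r, R) \subset \mathbb{C}$, with the outer semi-circle of radius $R$ still declared blue, and $N^+(\tau r, \tau R)$ becomes the number of nested interface half-loops separating $0$ from that outer semi-circle in this rescaled picture.

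Next, I would invoke Smirnov's convergence of the chordal percolation exploration to chordal SLE$_6$ \cite{Smi01} together with the full scaling limit of Camia--Newman \cite{CN06} to conclude that as $\tau \to \infty$ the ordered sequence of nested interface half-loops converges jointly in distribution to the ordered sequence of continuum nested half-loops obtained from a chordal SLE$_6$ exploration in $R\mathbb{D}^+$ from $-R$ to $0$, with the alternating targets $\pm R$ dictated by the blue outer boundary and the reflection symmetry of the half-annulus. By the scale invariance of chordal SLE$_6$, this continuum sequence is a dilation by $R$ of the nested domains $(\tilde{D}_k)$ of Section~\ref{secconf}, and the number of its members containing the closed upper half-disk of radius $r$ is, by definition, precisely $N(r/R)$. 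Since both variables are integer-valued, the weak convergence reduces to pointwise convergence of mass functions; the events $\{N(r/R) = j\}$ are continuity sets of the limiting loop configuration because, with probability one, no continuum half-loop touches the inner half-circle of radius $r$ tangentially (a standard consequence of SLE$_6$ regularity).

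Finally, for the moment statement, Lemma~\ref{lemld} furnishes constants $C_1, C_2, K > 0$ such that $P(T^+(\tau r, \tau R) \geq x) \leq C_1 e^{-C_2 x}$ for every $x > K \log_2(R/r)$ and every $\tau \geq 1$. Since the threshold $K \log_2(R/r)$ is independent of $\tau$, the family $\{T^+(\tau r, \tau R)^{k+1}\}_{\tau \geq 1}$ has uniformly bounded expectation, so $\{T^+(\tau r, \tau R)^{k}\}_{\tau \geq 1}$ is uniformly integrable; combined with the distributional convergence from the previous step this gives the moment conclusion. The main obstacle is the middle step: establishing the loop convergence in a topology strong enough to control the ``number of half-loops that surround a given inner boundary'' functional, which rests on the full scaling limit of \cite{CN06} and on the essentially routine verification of the continuity-event condition above.
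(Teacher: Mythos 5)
Your proposal follows essentially the same route as the paper: Proposition \ref{propeqi} turns $T^+(\tau r,\tau R)$ into the interface half-loop count, the convergence of the chordal exploration path to chordal SLE$_6$ (\cite{Smi01}, \cite{CN07}) gives the distributional limit $N(r/R)$, and the uniform exponential tail from Lemma \ref{lemld} yields uniform integrability of all powers and hence convergence of moments. The only place the paper is more specific is the discrete-to-continuum step: instead of your continuity-set condition about tangential touching of the inner half-circle, it invokes the half-plane $3$-arm event (as in Lemma 6.1 of \cite{CN06}) to ensure that whenever the exploration path comes close to the flat part of the boundary it actually touches it, which is the precise point needed to make the discrete half-loop count converge to the continuum count $N(r/R)$.
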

\begin{proof}
Recall $\mathbb{D}^+$ is the upper half unit disk. Denote by $\mathbb{D}^+_{\delta}$ the smallest connected domain of hexagons  (in $\mathcal{H}_{\delta}$) containing $\overline{\mathbb{D}^+}$. Let $\partial \mathbb{D}^+_{\delta}$ be the topological boundary of $\mathbb{D}^+_{\delta}$ (here $\mathbb{D}^+_{\delta}$ is considered as a domain of $\mathbb{C}$)  and $\Delta \mathbb{D}^+_{\delta}$ be the external site boundary of $\mathbb{D}^+_{\delta}$ (i.e., the set of hexagons that do
not belong to $\mathbb{D}^+_{\delta}$ but are adjacent to hexagons in $\mathbb{D}^+_{\delta}$).  A vertex $x\in \partial \mathbb{D}^+_{\delta}$ is called an \textbf{e-vertex} if the edge containing $x$ that is not in  $\partial \mathbb{D}^+_{\delta}$ does not belong to $\mathbb{D}^+_{\delta}$ either. Let $(-1)_{\delta}$ ($0_{\delta}$, respectively) be a closest e-vertex of $\mathcal{H}_{\delta}$ to $-1$ ($0$, respectively). Denote by $\partial_{-1,0}\mathbb{D}^+_{\delta}$  the portion of $\partial \mathbb{D}^+_{\delta}$ traversed counterclockwise from $(-1)_{\delta}$ to $0_{\delta}$, and the portion of $\Delta \mathbb{D}^+_{\delta}$ whose hexagons are adjacent to  $\partial_{-1,0}\mathbb{D}^+_{\delta}$ is denoted by $\Delta_{-1,0}\mathbb{D}^+_{\delta}$. The remaining part of $\Delta \mathbb{D}^+_{\delta}$ is denoted by $\Delta_{0,-1}\mathbb{D}^+_{\delta}$.   Suppose we color yellow all hexagons in $\Delta_{-1,0}\mathbb{D}^+_{\delta}$ and blue all those in $\Delta_{0,-1}\mathbb{D}^+_{\delta}$. Then for any percolation configuration inside $\mathbb{D}^+_{\delta}$, there is a unique interface path (say $\gamma_{\delta}$) from $(-1)_{\delta}$ to $0_{\delta}$, which separates the yellow cluster adjacent to $\Delta_{-1,0}\mathbb{D}^+_{\delta}$ from the blue cluster adjacent to $\Delta_{0,-1}\mathbb{D}^+_{\delta}$. The random path $\gamma_{\delta}$ is called a \textbf{chordal exploration path} in $D_{\delta}$ from $(-1)_{\delta}$ to $0_{\delta}$. We remark that $\gamma_{\delta}$ does not depend on the color of hexagons in $\Delta\mathbb{D}^+_{\delta}$. It is well-known that $\gamma_{\delta}$ converges weakly to a chordal SLE$_6$ trace in $\mathbb{D}^+_{\delta}$ from $-1$ to $0$ (see \cite{Smi01} and \cite{CN07}).

By using 3-arm event in the half-plane, it is not hard to show that whenever $\gamma_{\delta}$ comes close to the boundary $\partial\mathbb{D}^+_{\delta}$ then it does touch the boundary with high probability (see, e.g., Lemma 6.1 of \cite{CN06}). This implies that the number of interface half-loops surrounding $(r/R)\mathbb{D}^+_{\delta}$  in  $\mathbb{D}^+_{\delta}$ converges weakly to $N(r/R)$ as $\delta\downarrow0$. Therefore $N^+(\tau r,\tau R)$ converges weakly to $N(r/R)$ as $\tau\rightarrow\infty$.

Now Proposition \ref{propeqi} implies $T^+(\tau r,\tau R)$ converges weakly to $N(r/R)$ as $\tau\rightarrow\infty$. Lemma~ \ref{lemld} says that $\{T^+(\tau r,\tau R)\}_{\tau\geq 1}$ is uniformly integrable, so
\[E(T^+(\tau r,\tau R)^k)\rightarrow \mathbb{E}(N(r/R)^k) \text{ as }\tau\rightarrow\infty.\]
\end{proof}

We are ready to prove the strong law of large numbers for $c_n^+$.
\begin{proposition}\label{propslln}
\[\lim_{n\rightarrow\infty}\frac{c_n^+}{\log n}=\frac{\sqrt{3}}{2\pi} a.s.,~\lim_{n\rightarrow\infty}\frac{E c_n^+}{\log n}=\frac{\sqrt{3}}{2\pi}.\]
\end{proposition}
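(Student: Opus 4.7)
The plan is to sandwich $c_n^+$ between sums of independent annular first-passage times $T^+(R^k,R^{k+1})$ at a fixed large multiplicative scale $R$, apply Kolmogorov's SLLN at fixed $R$, and then let $R\to\infty$ to identify the constant via Proposition~\ref{propn}. Fix $R>1$ large and set $K=\lfloor\log n/\log R\rfloor$. The half-annuli $A^+(R^k,R^{k+1})$ for $0\le k<K$ are pairwise disjoint, so the random variables $T^+(R^k,R^{k+1})$ are independent; by Proposition~\ref{propnn} each converges in distribution and in every moment to $N(1/R)$ as $k\to\infty$, and Lemma~\ref{lemld} ensures that every moment is uniformly bounded in $k$.

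For the lower bound I use the identity $T^+(r,R)=\rho^+(r,R)$ from Proposition~\ref{propeqi}: any path realizing $c_n^+$ must cross every disjoint yellow half-circuit surrounding $0$ in each $A^+(R^k,R^{k+1})$, and half-circuits from different annuli remain disjoint, hence
\[
c_n^+\;\ge\;\sum_{k=0}^{K-1}\rho^+(R^k,R^{k+1})\;=\;\sum_{k=0}^{K-1}T^+(R^k,R^{k+1}).
\]
For the upper bound I build a competitor by concatenating optimal paths $\gamma_k$ realizing $T^+(R^k,R^{k+1})$. Standard RSW/arm-event inputs in a half-annulus of bounded aspect ratio provide, with overwhelming probability, a short blue bridge of $O(1)$ passage time joining the outer endpoint of $\gamma_k$ to the inner endpoint of $\gamma_{k+1}$ along the circle of radius $R^{k+1}$, giving
\[
c_n^+\;\le\;\sum_{k=0}^{K-1}T^+(R^k,R^{k+1})+\sum_{k=0}^{K-1}\xi_k+O(1),
\]
where each $\xi_k$ is an independent gluing cost with moments bounded uniformly in $k$.

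Kolmogorov's SLLN for independent (but not identically distributed) summands with uniformly bounded variance, combined with Ces\`aro averaging on $ET^+(R^k,R^{k+1})\to\mathbb{E}\,N(1/R)$, yields
\[
\lim_{n\to\infty}\frac{1}{\log n}\sum_{k=0}^{K-1}T^+(R^k,R^{k+1})\;=\;\frac{\mathbb{E}\,N(1/R)}{\log R}\quad\text{a.s.},
\]
and likewise $(\log n)^{-1}\sum_{k<K}\xi_k\le C/\log R$ a.s. Sandwiching and letting $R\to\infty$, invoking $\mathbb{E}\,N(1/R)/\log R\to\sqrt{3}/(2\pi)$ from Proposition~\ref{propn}, delivers $c_n^+/\log n\to\sqrt{3}/(2\pi)$ almost surely. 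Uniform integrability of $c_n^+/\log n$ (via Lemma~\ref{lemld}) upgrades this to $L^1$ convergence, yielding the limit for $Ec_n^+/\log n$. The main technical obstacle is the gluing step in the upper bound: the $\xi_k$ must be controlled uniformly via half-plane RSW/arm-event estimates so that their total contribution is $O(\log n/\log R)$, which becomes negligible after $R\to\infty$. The rest is bookkeeping.
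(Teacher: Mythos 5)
Your proposal is correct and follows essentially the same route as the paper: the paper's proof of Proposition~\ref{propslln} likewise rests on a decomposition into half-annuli at a fixed multiplicative scale, comparison of $c_n^+$ with the independent crossing times $T^+$, identification of the constant through Propositions~\ref{propn} and \ref{propnn}, and control of errors and uniform integrability via Lemma~\ref{lemld}, with the bookkeeping deferred to Proposition 3.6 of \cite{Yao16}. The one point to tighten is the gluing step: your bridges along the circles of radius $R^{k+1}$ live inside the crossing annuli, so the $\xi_k$ are not literally independent of the $T^+$'s as claimed; this is repaired in the standard way (glue inside disjoint thin shells, or split the indices into even and odd blocks before applying the law of large numbers).
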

\begin{proof}
The proof uses Propositions \ref{propn} and \ref{propnn}, and is similar to that of Proposition 3.6 of \cite{Yao16}.
\end{proof}

\section{Second proof of SLLN  using expected number of clusters}\label{secsec}
In \cite{HS11}, an explicit formula for the scaling limit of the expected number of clusters crossing a Jordan domain is given, and this scaling limit is proved to be conformal invariant. Using that result and Proposition \ref{propeqi}, we give a second proof of Proposition \ref{propslln}.

Let $D\subsetneq\mathbb{C}$ be a Jordan domain (i.e., $D$ is simply connected and the boundary of $D$, $\partial D$, is a Jordan curve). Let $z_1, z_2,z_3,z_4\in \partial D$. We assume $\partial D$ is oriented counterclockwise, and $z_1, z_2,z_3,z_4$ appear in this order. Suppose $\phi: D\rightarrow\mathbb{H}$ is a conformal map. The \textbf{cross-ratio} of $(D;z_1,z_2,z_3,z_4)$ is defined by
\begin{equation*}
\lambda:=\lambda(D;z_1,z_2,z_3,z_4)=\frac{(\phi(z_4)-\phi(z_3))(\phi(z_2)-\phi(z_1))}{(\phi(z_4)-\phi(z_2))(\phi(z_3)-\phi(z_1))}.
\end{equation*}
It is easy to see that M\"{o}bius transformations preserve cross-ratios, $\lambda\in(0,1)$ and $$\lambda(D;z_1,z_2,z_3,z_4)=1-\lambda(D;z_2,z_3,z_4,z_1).$$

We state a result by Hongler and Smirnov \cite{HS11}. Recall $D_{\delta}$ is the smallest connected domain of hexagons (in $\mathcal{H}_{\delta}$) which contains $\overline{D}$. Consider critical site percolation on $D_{\delta}$. Recall that $P_{\delta}$ and $E_{\delta}$ are the corresponding probability measure and expectation. Let $z_i^{\delta}$ be a closest vertex of $\partial D_{\delta}$ to $z_i$ for $i=1,2,3,4$. Let $N(D_{\delta};z_1^{\delta},z_2^{\delta},z_3^{\delta},z_4^{\delta})$ be the number of open clusters in $D_{\delta}$ which connect the arc along $\partial D_{\delta}$ from $z_1^{\delta}$ to $z_2^{\delta}$ and the arc along $\partial D_{\delta}$ from $z_3^{\delta}$ to $z_4^{\delta}$. Then we have
\begin{theorem}[Proposition 1 in \cite{HS11}]\label{thm1}
Let $D\subsetneq\mathbb{C}$ be a Jordan domain and $z_1, z_2,z_3,z_4\in \partial D$ are ordered counterclockwise. Suppose $\lambda$ is the cross-ratio of $(D;z_1,z_2,z_3,z_4)$. Then we have
\begin{eqnarray*}
\lim_{\delta\rightarrow 0}E_{\delta}(N(D_{\delta};z_1^{\delta},z_2^{\delta},z_3^{\delta},z_4^{\delta}))&=&\frac{2\pi\sqrt{3}}{\Gamma(1/3)}\lambda^{\frac{1}{3}}~_2F_1
\left(\frac{1}{3},\frac{2}{3};\frac{4}{3};\lambda\right)-\frac{\sqrt{3}}{4\pi}\lambda~_3F_2\left(1,1,\frac{4}{3};\frac{5}{3},2;\lambda\right)\\
&&+\frac{\sqrt{3}}{4\pi}\log\left(\frac{1}{1-\lambda}\right).
\end{eqnarray*}
\end{theorem}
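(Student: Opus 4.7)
The plan is to establish conformal invariance of the limit, reduce to a canonical configuration, and identify the limit as the solution to an explicit boundary-value problem. First, using the convergence of the full scaling limit of critical site percolation to a conformally invariant collection of cluster interfaces (\cite{CN06}, \cite{Smi01}) together with appropriate uniform integrability, I would argue that $\lim_{\delta\downarrow 0} E_\delta[N(D_\delta; z_1^\delta,\dots,z_4^\delta)]$ exists and depends only on the conformal class of $(D;z_1,\dots,z_4)$, hence only on $\lambda$. After a M\"{o}bius transformation I would normalize to $D=\mathbb{H}$ with $(z_1,z_2,z_3,z_4)=(0,\lambda,1,\infty)$, reducing the problem to computing a single function $F(\lambda)$, $\lambda\in(0,1)$.

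Next, I would represent $E_\delta[N]$ as a sum over hexagons $v\in D_\delta$ of the probability that $v$ is a distinguished (say, leftmost-then-topmost) hexagon in a cluster meeting both boundary arcs $(z_1z_2)_\delta$ and $(z_3z_4)_\delta$. For interior $v$, this probability factorizes asymptotically into a polychromatic 4-arm event at $v$, which scales like $\delta^{5/4}$, times a macroscopic 4-point connection event whose scaling limit is a product of Cardy-type crossing probabilities. Combined with the hexagon area $\asymp \delta^2$, this yields a finite Riemann-integral limit with an explicit conformally covariant density $\Phi_\lambda$ on $\mathbb{H}$, giving $F(\lambda)=\int_{\mathbb{H}} \Phi_\lambda(z)\,dA(z)$ plus boundary corrections from the four marked points that use boundary arm exponents.

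As an alternative which I expect to be cleaner for explicit evaluation, I would derive a linear ODE in $\lambda$ for $F$ via the chordal SLE$_6$ exploration from $z_1=0$ to $z_4=\infty$: the domain Markov property and It\^{o} calculus applied to the Loewner driving function should produce a Fuchsian second-order ODE on $(0,1)$ with regular singularities at $\{0,1,\infty\}$. The indicial exponents $0$ and $1/3$ at $\lambda=0$ give the homogeneous solution $\lambda^{1/3}\,{}_2F_1(1/3,2/3;4/3;\lambda)$; an inhomogeneous forcing term (encoding macroscopic cluster creation) supplies the particular solution $\lambda\,{}_3F_2(1,1,4/3;5/3,2;\lambda)$; and matching the $\lambda\to 1^-$ asymptotics, where the arcs $(z_1z_2)$ and $(z_3z_4)$ merge, fixes the $\log\bigl(1/(1-\lambda)\bigr)$ term. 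The main obstacle is justifying convergence of the discrete sum and controlling microscopic clusters near $\partial D$ and near the four marked boundary points; the crucial input is that the half-plane three-arm exponent equals $2$, making such contributions integrable in the limit, while a careful wedge analysis near each marked point is needed to capture the logarithmic singularity correctly.
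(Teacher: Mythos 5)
This statement is not proved in the paper at all: it is quoted verbatim as Proposition 1 of \cite{HS11} and used as an external input, so what you have written must be judged as a proposed proof of Hongler--Smirnov's theorem itself, and as such it has serious gaps. The central one is your interior localization. The probability of a polychromatic $4$-arm event at scale $\delta$ is $\delta^{5/4+o(1)}$, and there are of order $\delta^{-2}$ hexagons, so a sum of ``$4$-arm at $v$ times macroscopic connection'' terms is of order $\delta^{-3/4}$ and diverges, whereas $E_\delta[N]$ stays bounded (the limit you are trying to compute is finite). The resolution is that the event ``$v$ is the leftmost-then-topmost hexagon of a cluster joining the two arcs'' is \emph{not} a local event and does not factorize into a local arm event times a macroscopic crossing event: it requires that no hexagon of the \emph{same} cluster lies further left, a global constraint with a different (much smaller) probability. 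So the proposed Riemann-sum representation with density $\Phi_\lambda$ of covariance exponent $2$ does not follow from arm-exponent asymptotics, and the appeal to the half-plane $3$-arm exponent only controls boundary contributions, not this bulk accounting. Cluster counting is a genuinely nonlocal functional, which is exactly why \cite{HS11} is a nontrivial result.

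The alternative ODE route is also not a proof as stated. You never exhibit $F(\lambda)$, or any discrete precursor of it, as an observable satisfying a closed equation under the exploration/SLE$_6$ dynamics; the ``inhomogeneous forcing term encoding macroscopic cluster creation'' is precisely the missing mathematical content, and it is from that unspecified term and the unjustified boundary conditions at $\lambda\to 0,1$ that all the specific constants $\frac{2\pi\sqrt3}{\Gamma(1/3)}$, $\frac{\sqrt3}{4\pi}$ and the $\log\bigl(1/(1-\lambda)\bigr)$ singularity would have to come; guessing the indicial exponents of a Fuchsian equation does not pin down the right-hand side of the theorem. (For comparison, the published argument proceeds by an exact identity expressing the expected number of crossing clusters through crossing probabilities, i.e.\ Cardy's formula, which is then integrated in the cross-ratio to produce the ${}_3F_2$ and logarithmic terms --- not through a pivotal-type localization nor an It\^o-derived inhomogeneous ODE.) Finally, even the preliminary step --- existence and conformal invariance of $\lim_{\delta\to0}E_\delta[N]$ --- needs a priori moment bounds on the number of crossing clusters (RSW/arm estimates give uniform integrability), which you assume rather than establish. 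As it stands, neither of your two routes closes.
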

\begin{remark}\label{rem1}
By the argument leading to the above theorem in \cite{HS11}, one sees that
\begin{equation*}
\left|\frac{2\pi\sqrt{3}}{\Gamma(1/3)}\lambda^{\frac{1}{3}}~_2F_1
\left(\frac{1}{3},\frac{2}{3};\frac{4}{3};\lambda\right)-\frac{\sqrt{3}}{4\pi}\lambda~_3F_2\left(1,1,\frac{4}{3};\frac{5}{3},2;\lambda\right)\right|\leq1.
\end{equation*}
\end{remark}

Let $a^+(1,R):=\{z\in\mathbb{H}:1<|z|<R\}$ for some $R>1$ be the half-annulus in the upper half-plane with inner radius $1$ and outer radius $R$. Let $z_1=-R, z_2=-1, z_3=1, z_4=R\in\partial a^+(1,R)$. Then we have
\begin{proposition}\label{prop6}
\begin{equation*}
\lim_{R\rightarrow\infty}\frac{\lim_{\delta\rightarrow0}E_{\delta}(N(a^+(1,R)_{\delta};z_1^{\delta},z_2^{\delta},z_3^{\delta},z_4^{\delta}))}{\log R}=\frac{\sqrt{3}}{4\pi}
\end{equation*}
\end{proposition}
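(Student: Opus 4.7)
The plan is to apply Theorem~\ref{thm1} to $D = a^+(1,R)$ with the four marked boundary points $z_1, z_2, z_3, z_4$, after computing the cross-ratio $\lambda = \lambda(a^+(1,R); z_1, z_2, z_3, z_4)$ as an explicit function of $R$ and extracting its $R \to \infty$ asymptotics. The work therefore reduces to identifying how fast $1-\lambda$ decays with $R$.

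First I would conformally map $a^+(1,R)$ to a rectangle using the principal branch of $\log$. This sends $a^+(1,R)$ onto $\mathcal{R}_R := (0, L) \times (0, \pi)$ with $L := \log R$, and carries $z_3=1, z_4=R, z_1=-R, z_2=-1$ to the four corners $0, L, L+i\pi, i\pi$ (in counterclockwise order). Composing with the inverse of a Schwarz--Christoffel map $f(\zeta) = c \int_0^\zeta du / \sqrt{(1-u^2)(1-k^2 u^2)}$ from $\mathbb{H}$ onto $\mathcal{R}_R$ (after translation), I get a conformal $\phi: a^+(1,R) \to \mathbb{H}$ whose values at the marked points are the standard SC corner-preimages: $\phi(z_1) = 1/k$, $\phi(z_2) = -1/k$, $\phi(z_3) = -1$, $\phi(z_4) = 1$. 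Here $k$ and $c$ are determined by matching the dimensions of the image rectangle, $2cK(k) = L$ and $cK'(k) = \pi$, where $K(k), K'(k)$ are the complete elliptic integrals of the first kind. A one-line computation then yields
\[ \lambda = \frac{4k}{(1+k)^2}, \qquad 1 - \lambda = \left(\frac{1-k}{1+k}\right)^2. \]

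For the asymptotics, dividing the matching conditions gives $2K(k)/K'(k) = L/\pi \to \infty$, so $k \to 1^-$ as $R \to \infty$. Using $K'(k) = K(\sqrt{1-k^2}) \to \pi/2$ and the classical asymptotic $K(k) \sim \log(4/k')$ with $k' := \sqrt{1-k^2}$, one obtains $K(k) \sim L/4$, hence $k' \sim 4R^{-1/4}$ and $1-k \sim 8R^{-1/2}$. Therefore $1 - \lambda \sim 16/R$, and $\log(1/(1-\lambda)) = \log R - \log 16 + o(1)$.

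Finally, plugging into the Hongler--Smirnov formula in Theorem~\ref{thm1}, the two hypergeometric terms have combined absolute value at most $1$ by Remark~\ref{rem1}, so they contribute zero after dividing by $\log R$. The logarithmic term gives $(\sqrt{3}/(4\pi))(\log R - \log 16 + o(1))/\log R \to \sqrt{3}/(4\pi)$, which is the claimed limit. The main obstacle is really just the asymptotic $1 - \lambda \sim 16/R$: it requires setting up the Schwarz--Christoffel correspondence correctly and invoking the standard $K(k) \sim \log(4/k')$ expansion. Everything else is bookkeeping, and no additional probabilistic input beyond Theorem~\ref{thm1} and Remark~\ref{rem1} is needed.
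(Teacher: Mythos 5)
Your proposal is correct and follows essentially the same route as the paper: map the half-annulus to a rectangle via $\log$, identify the cross-ratio through a Schwarz--Christoffel map in terms of the elliptic modulus $k$, use the standard asymptotics of the complete elliptic integrals to get $\log\bigl(1/(1-\lambda)\bigr)\sim\log R$, and then apply Theorem~\ref{thm1} together with the uniform bound of Remark~\ref{rem1}. The only difference is cosmetic: you compute $1-\lambda$ directly (even with the sharper constant $1-\lambda\sim 16/R$), whereas the paper uses the symmetry $\lambda(D;z_1,z_2,z_3,z_4)=1-\lambda(D;z_2,z_3,z_4,z_1)$ and a separate lemma on the cross-ratio of a long rectangle, which only records the logarithmic rate.
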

\begin{proof}
Let $D((\log R)/\pi):=\{z\in\mathbb{C}:0<\text{Re}(z)<(\log R)/\pi, 0<\text{Im}(z)<1\}$. Then $\phi:a^+(1,R)\rightarrow D((\log R)/\pi)$ such that $\phi(z)=(\log z)/\pi$ is a conformal map. Hence
\begin{eqnarray*}
\lambda(a^+(1,R);z_1,z_2,z_3,z_4)&=&\lambda(D((\log R)/\pi);(\log R)/\pi+i,i,0,(\log R)/\pi)\\
&=&1-\lambda(D((\log R)/\pi);i,0,(\log R)/\pi,(\log R)/\pi+i).
\end{eqnarray*}
The proposition follows from Theorem \ref{thm1}, Remark \ref{rem1} and the following lemma.
\end{proof}
\begin{lemma}
Let $\eta$ be the \textbf{aspect-ratio} of the rectangle $D(\eta):=\{z\in\mathbb{C}:0<\text{Re}(z)<\eta,0<\text{Im}(z)<1\}$, i.e., the ratio of the width of the rectangle to its height. Then
\begin{equation*}
\lim_{\eta\rightarrow\infty}\frac{\log\lambda(D(\eta))}{\eta}=-\pi,
\end{equation*}
where $\lambda(D(\eta))$ is the cross-ratio of $(D(\eta);i,0,\eta,\eta+i)$.
\end{lemma}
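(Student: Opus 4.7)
The plan is to compute $\lambda(D(\eta))$ explicitly via a classical Schwarz--Christoffel map and then perform the asymptotic analysis as $\eta\to\infty$.

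First, I would use the fact that for each $k\in(0,1)$, the incomplete elliptic integral of the first kind
\[
F(w,k)=\int_0^w\frac{dt}{\sqrt{(1-t^2)(1-k^2t^2)}}
\]
maps $\mathbb{H}$ conformally onto the rectangle $[-K(k),K(k)]\times[0,K'(k)]$, sending the four real boundary points $-1/k,-1,1,1/k$ to the four corners in counterclockwise order; here $K(k)$ is the complete elliptic integral of the first kind and $K'(k)=K(\sqrt{1-k^2})$. Post-composing $F(\cdot,k)$ with the affine map $z\mapsto(z+K(k))/K'(k)$ produces a conformal bijection $\psi:\mathbb{H}\to D(\eta)$, provided $k=k(\eta)\in(0,1)$ is chosen to satisfy the modulus equation $\eta=2K(k)/K'(k)$. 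Such a $k$ exists and is unique since $k\mapsto K(k)/K'(k)$ is continuous and strictly increasing from $0$ to $\infty$ on $(0,1)$. Under $\psi$, the preimages of $z_1=i$, $z_2=0$, $z_3=\eta$, $z_4=\eta+i$ are exactly $-1/k,-1,1,1/k$, so by conformal invariance of the cross-ratio,
\[
\lambda(D(\eta))=\frac{(1/k-1)(-1+1/k)}{(1/k+1)(1+1/k)}=\left(\frac{1-k}{1+k}\right)^{2}.
\]

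Next, I would analyze $k=k(\eta)$ as $\eta\to\infty$. Since $K(k)/K'(k)\to\infty$, necessarily $k\to 1^-$; writing $k'=\sqrt{1-k^2}\to 0^+$, I would invoke the classical asymptotics $K(k)=\log(4/k')+o(1)$ and $K'(k)=K(k')\to K(0)=\pi/2$ (both standard facts about complete elliptic integrals). Substituting into $\eta=2K(k)/K'(k)$ gives $\eta\sim(4/\pi)\log(4/k')$, hence $k'\sim 4e^{-\pi\eta/4}$. Since $1-k=(k')^2/(1+k)\sim(k')^2/2$ and $1+k\to 2$, we obtain
\[
\lambda(D(\eta))\sim\left(\frac{(k')^2/2}{2}\right)^{2}=\frac{(k')^4}{16}\sim 16\,e^{-\pi\eta},
\]
so $\log\lambda(D(\eta))=-\pi\eta+O(1)$, which yields the desired limit.

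The only substantive input is the classical asymptotic expansion of $K(k)$ as $k\to 1$; everything else is a direct cross-ratio computation using the Schwarz--Christoffel parametrization of the rectangle. Consequently I do not anticipate a genuine obstacle; the ``hard part'' is merely bookkeeping of the elliptic-integral asymptotics, which can be quoted from any standard reference.
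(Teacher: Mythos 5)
Your proposal is correct and takes essentially the same route as the paper: both use the Schwarz--Christoffel elliptic-integral map of $\mathbb{H}$ onto a rectangle with boundary preimages $-1/k,-1,1,1/k$, yielding $\lambda=\bigl(\tfrac{1-k}{1+k}\bigr)^2$ and aspect ratio $2K/K'$, and then conclude via the standard asymptotics of the complete elliptic integrals as $k\uparrow 1$. The only differences are cosmetic (modulus vs.\ parameter convention for $K$, and normalizing the rectangle to $D(\eta)$ rather than working with $R(k)$ directly).
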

\begin{proof}
To be consistent with the tradition in the literature, we will denote by $k$ some real number in $(0,1)$ in the proof. Let $R(k)$ be the rectangle with corners $\pm K(k^2)$, $\pm K(k^2)+iK(1-k^2)$ where
\begin{equation*}
K(u)=\int_0^1\frac{dt}{\sqrt{(1-t^2)(1-ut^2)}}
\end{equation*}
is the complete elliptic integral of the first kind.
Let $\psi:\mathbb{H}\rightarrow R(k)$ be the Schwartz-Christoffel transfrom
\begin{equation*}
\psi(z)=\int_0^z\frac{dt}{\sqrt{(1-t^2)(1-k^2t^2)}}.
\end{equation*}
Then $\psi$ is a conformal map with $\psi(-k^{-1})=-K(k^2)+iK(1-k^2)$, $\psi(-1)=-K(k^2)$, $\psi(1)=K(k^2)$ and $\psi(k^{-1})=K(k^2)+iK(1-k^2)$. So
$$\lambda(R(k);-K(k^2)+iK(1-k^2),-K(k^2),K(k^2),K(k^2)+iK(1-k^2))=\frac{(1-k)^2}{(1+k)^2}.$$
Note that the aspect-ratio for $R(k)$ is $2K(k^2)/K(1-k^2)$. To prove the lemma, it suffices to show
\begin{equation*}
\lim_{k\uparrow 1}\frac{\log((1-k)^2/(1+k)^2)}{2K(k^2)/K(1-k^2)}=-\pi.
\end{equation*}
The last limit holds because of the well-known estimates (see, e.g. p. 57 of \cite{MM99})
\begin{equation*}
K(1-k^2)=\frac{\pi}{2}+o(1),\  K(k^2)=2\log 2-\log\sqrt{1-k^2}+o(1),\text{ as } k\uparrow 1.
\end{equation*}
\end{proof}

Now we can give a second proof of the strong law of large numbers for $c_n^+$ (i.e., Proposition \ref{propslln}). For any $k, j \in\mathbb{N}$, let us put monochromatic (blue) boundary condition on $A^+(2^{k(j-1)},2^{kj})$ (i.e., color blue all hexagons in $\Delta A^+(2^{k(j-1)},2^{kj})$). In the rest of the section, we will fix the lattice spacing to be 1 (i.e., $\delta=1$) and enlarge the domain to get a scaling limit. It is easy to see from Proposition \ref{propeqi} that
\[T^+(2^{k(j-1)},2^{kj})\overset{d}{=}N^+(2^{k(j-1)},2^{kj})=2N(A^+(2^{k(j-1)},2^{kj}),-2^{kj},-2^{k(j-1)},2^{k(j-1)},2^{kj}).\]
Then Theorem \ref{thm1} and Proposition \ref{prop6} imply that
\begin{equation}\label{eqTlimit1}
\lim_{j\rightarrow \infty}ET^+(2^{k(j-1)},2^{kj})  \text{ exists and the limit only depends on } 1/2^k,
\end{equation}

\begin{equation}\label{eqTlimit2} 
\lim_{k\rightarrow\infty}\frac{\lim_{j\rightarrow \infty}ET^+(2^{k(j-1)},2^{kj})}{\log{2^k}}=\frac{\sqrt{3}}{2\pi}.
\end{equation}
The convergence of Ces\`{a}ro mean and \eqref{eqTlimit1} give for each $k\in\mathbb{N}$
\begin{equation}\label{eqCas}
\lim_{m\rightarrow\infty}\frac{\sum_{j=1}^m ET^+(2^{k(j-1)},2^{kj})}{m}=\lim_{j\rightarrow \infty}ET^+(2^{k(j-1)},2^{kj}).
\end{equation}
Therefore,
\begin{equation}\label{eqdlimit}
\lim_{k\rightarrow\infty}\lim_{n\rightarrow\infty}\frac{\sum_{j=1}^{\lfloor\log_{2^k}n\rfloor}ET^+(2^{k(j-1)},2^{kj})}{\log n}=\lim_{k\rightarrow\infty}\frac{\lim_{j\rightarrow\infty}ET^+(2^{k(j-1)},2^{kj})}{\log 2^k}=\frac{\sqrt{3}}{2\pi},
\end{equation}
where the first equality follows from \eqref{eqCas} and the second follows from \eqref{eqTlimit2}. It is not hard to show (the detail is a bit tedious and we refer the reader to a similar argument in the proof of Proposition 3.6 in \cite{Yao16}) that
\begin{equation}\label{eqequi}
\lim_{k\rightarrow\infty}\lim_{n\rightarrow\infty}\frac{E c_n^+}{\sum_{j=1}^{\lfloor\log_{2^k}n\rfloor}ET^+(2^{k(j-1)},2^{kj})}=1.
\end{equation}
Combining \eqref{eqdlimit} and \eqref{eqequi}, we get
\begin{equation*}
\lim_{n\rightarrow\infty}\frac{E c_n^+}{\log n}=\lim_{k\rightarrow \infty}\lim_{n\rightarrow\infty}\frac{E c_n^+}{\log n}=\frac{\sqrt{3}}{2\pi}.
\end{equation*}
A similar argument as the proof of Lemma 2.5 in \cite{Yao14} implies
\[\lim_{n\rightarrow\infty}\frac{c_n^+}{E c_n^+}=1.\]
The last two displayed equations complete the proof of Proposition \ref{propslln}.

\section{Variance of $c_n^+$}\label{secvar}
The proof of the limit result for Var$(c_n^+)$ is essentially the
same as for Var$(c_n)$ in \cite{Yao16}.  For the convenience of the
reader, we give the idea in the following.  As in \cite{Yao16}, we
use a modified martingale method introduced in \cite{KZ97}.  We
first introduce some notations.
For $j\in \mathbb{N}\cup\{0\}$, define the half-annulus
\begin{equation*}
A^+(j):=A^+(2^j,2^{j+1}).
\end{equation*}
Furthermore, define
\begin{align*}
&m(j):=\inf\{k\geq j: A(k)\mbox{ contains a blue half-circuit
surrounding 0}\},\\
&\mathcal {C}_j:=\mbox{the innermost blue half-circuit surrounding 0
in }A^+(m(j)),\\
&\mathscr{F}_j:=\sigma\mbox{-field generated by }\{t(v):v\in
\overline{\mathcal {C}}_j\},
\end{align*}
where $\overline{\mathcal {C}}_j:=\{\mbox{the sites in the finite
component of }\{\textbf{\textrm{V}}\cap\overline{\mathbb{H}}\}\backslash
\mathcal {C}_j\}\cup \mathcal {C}_j$. Denote by $T^+(0,\mathcal{C}_{j})$ the first-passage time in $\overline{\mathcal{C}}_j$ from $0$ to $\mathcal{C}_j$. That is,
\[T^+(0,\mathcal{C}_{j}):=\inf\{T(\gamma): \gamma \in \overline{\mathcal{C}}_j \text{ and }\gamma \text{ starts at }0\text{ and ends at a site in }\mathcal{C}_j \}.\]
For all $j\in \mathbb{N}$,
denote by $\mathscr{F}_{-j}$ the trivial $\sigma$-field. For
$k,q\in\mathbb{N}$, write
\begin{equation*}
T^+(0,\mathcal {C}_{kq})-E(T^+(0,\mathcal
{C}_{kq}))=\sum_{j=0}^q\left(E(T^+(0,\mathcal
{C}_{kq})|\mathscr{F}_{kj})-E(T^+(0,\mathcal
{C}_{kq})|\mathscr{F}_{k(j-1)})\right):=\sum_{j=0}^q\Delta_{k,j}.
\end{equation*}
Then $\{\Delta_{k,j}\}_{0\leq j\leq q}$ is a
$\mathscr{F}_{kj}$-martingale increment sequence.  Hence,
\begin{equation*}
\mathrm{Var}(T^+(0,\mathcal {C}_{kq}))=\sum_{j=0}^qE(\Delta_{k,j}^2).
\end{equation*}
One can use this sum to estimate Var$[c_n^+]$ with $2^{kq}\leq n\leq
2^{k(q+1)}$.  The proof proceeds as follows.  First, essentially in
the same way one can prove the half-circuit version of Lemmas 3.7,
3.8, 3.9, 3.10 in \cite{Yao16}. Then, similarly to the proof of
Proposition 3.11 in \cite{Yao16}, combining these Lemmas,
Propositions \ref{propn} and \ref{propnn}, one gets
the following proposition.
\begin{proposition}\label{propvar}
\begin{align*}
\lim_{n\rightarrow\infty}\frac{\mathrm{Var}(c_n)}{\log
n}=\frac{2\sqrt{3}}{\pi}-\frac{9}{\pi^2}.
\end{align*}
\end{proposition}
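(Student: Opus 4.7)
The plan is to adapt the martingale-increment method of \cite{KZ97}, as implemented in Proposition 3.11 of \cite{Yao16}, to the half-plane setting, using the notations $A^+(j)$, $\mathcal{C}_j$, $\mathscr{F}_j$ and $\Delta_{k,j}$ already set up in the paragraph preceding the statement. Given $2^{kq}\le n \le 2^{k(q+1)}$, the first task is a reduction step: show
\[\bigl|\mathrm{Var}(c_n^+) - \mathrm{Var}(T^+(0,\mathcal{C}_{kq}))\bigr| = o(\log n)\]
as $k,q \to \infty$ in a suitable coupled way. This uses that with overwhelming probability (by RSW and the existence of a blue half-circuit in each $A^+(j)$ with uniformly positive probability) $\mathcal{C}_{kq}$ lies in a half-annulus of radii comparable to $2^{kq}$, together with the large deviation bound of Lemma~\ref{lemld}, so the contribution of $|T^+(0,\mathcal{C}_{kq})-c_n^+|$ to the variance is negligible. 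After this reduction, the martingale-difference orthogonality gives
\[\mathrm{Var}(T^+(0,\mathcal{C}_{kq})) = \sum_{j=0}^{q} E(\Delta_{k,j}^2).\]

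Next, I would prove the half-circuit analogs of Lemmas 3.7--3.10 of \cite{Yao16}. The essential properties needed are: (i) $\Delta_{k,j}$ is, up to a boundary error, a functional of the percolation configuration in the half-annular region between $\mathcal{C}_{k(j-1)}$ and $\mathcal{C}_{kj}$; (ii) by the spatial Markov property, conditional on the inner field, the outer region sees a monochromatic blue boundary along $\mathcal{C}_{k(j-1)}$; (iii) using quasi-independence together with Proposition~\ref{propnn}, the random thickness of this annular slab can be replaced by a deterministic dyadic scale $2^k$ up to $o(1)$ error. This produces a limit
\[\sigma_k^2 := \lim_{j\to\infty} E(\Delta_{k,j}^2),\]
and further shows that $\sigma_k^2$ equals the variance of the centered fluctuation of $T^+(1,2^k)$ (with monochromatic outer boundary) around its conditional expectation given the innermost blue half-circuit, up to errors vanishing as $k\to\infty$.

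The third step is to identify $\sigma_k^2$ asymptotically. By Proposition~\ref{propnn}, $T^+(\tau,\tau 2^k) \to N(2^{-k})$ in distribution and in all moments as $\tau\to\infty$, and by Proposition~\ref{propn},
\[\mathrm{Var}(N(2^{-k})) = \Bigl(\tfrac{2\sqrt{3}}{\pi}-\tfrac{9}{\pi^2}\Bigr)\,k\log 2 + o(k).\]
The inner boundary data $\mathscr{F}_{k(j-1)}$ captures only a bounded amount of information beyond ``there is a blue half-circuit''; a Cauchy--Schwarz/quasi-independence argument of the type used in Lemma 3.10 of \cite{Yao16} then gives $\sigma_k^2 = \mathrm{Var}(N(2^{-k})) + o(k)$. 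Combining the three steps,
\[\frac{\mathrm{Var}(c_n^+)}{\log n} \sim \frac{q\,\sigma_k^2}{kq\log 2} = \frac{\sigma_k^2}{k\log 2} \xrightarrow[k\to\infty]{} \frac{2\sqrt{3}}{\pi}-\frac{9}{\pi^2}.\]

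The main obstacle will be the quasi-independence work inside Step~2: one must show that the martingale increment $\Delta_{k,j}$, which is the change in conditional expectation upon revealing one more dyadic annular slab of data, accurately tracks the fluctuation of the passage time across a clean monochromatic dyadic annulus. The subtleties are that $\mathcal{C}_{k(j-1)}$ has a random irregular shape, and that the passage time between successive blue half-circuits is correlated with the geometry of $\mathcal{C}_{k(j-1)}$ itself. Handling this requires sharp polynomial decoupling bounds based on half-plane three-arm exponents (as in the proof of Lemma 3.9 of \cite{Yao16}), which is where the boundary nature of the problem demands new arm-event estimates in the half-plane rather than the full-plane versions used in \cite{Yao16}.
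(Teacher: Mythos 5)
Your proposal follows essentially the same route as the paper: the Kesten--Zhang martingale decomposition of \cite{KZ97} in the form of Proposition 3.11 of \cite{Yao16}, with half-circuit analogues of Lemmas 3.7--3.10 there, and the constant identified through Propositions \ref{propn} and \ref{propnn}. The paper itself only sketches these steps and defers the details to \cite{Yao16}, so your outline (including the flagged half-plane decoupling work) matches the intended argument.
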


\section{Proofs of Theorem \ref{thmmain}, Corollary \ref{cormain}, Propositions \ref{propbb} and \ref{propcyl}}\label{seccor}
In this section, we complete the proofs of all results stated in the introduction.
\begin{proof}[Proof of Theorem \ref{thmmain}]
Theorem \ref{thmmain} is just the combination of Propositions \ref{propslln} and \ref{propvar}.
\end{proof}
\begin{proof}[Proof of Corollary \ref{cormain}]
Recall the definitions of $\tilde{D}_k$ and $N(\epsilon)$ in Sections \ref{secconf} and \ref{secfirst}. We can define $\tilde{D}_k^{\alpha}$  similarly by running a chordal SLE$_6$ in $\overline{\mathbb{D}^{\alpha}}$ from $e^{i\alpha}$ to $0$. Let
\[N^{\alpha}(\epsilon):=\sup\{k:\overline{\mathbb{D}}_{\epsilon}\subseteq\tilde{D}_k^{\alpha}\}.\]
Note that $\phi(z)=z^{\pi/\alpha}$ is a conformal bijection from $\mathbb{D}^{\alpha}$ to $\mathbb{D}^+$. Using the conformal invariance of chordal SLE, we have
\[N^{\alpha}(\epsilon)\overset{d}{=}N(\epsilon^{\pi/\alpha}).\]
So Proposition \ref{propn} implies
\[\lim_{\epsilon\downarrow 0}\frac{N^{\alpha}(\epsilon)}{\log(1/\epsilon)}=\frac{\sqrt{3}}{2\alpha} a.s.,~\lim_{\epsilon\downarrow 0}\frac{\mathbb{E}N^{\alpha}(\epsilon)}{\log(1/\epsilon)}=\frac{\sqrt{3}}{2\alpha},~\lim_{\epsilon\downarrow0}\frac{\mathrm{Var}(N^{\alpha}(\epsilon))}{\log(1/\epsilon)}=\frac{2\sqrt{3}}{\alpha}-\frac{9}{\pi\alpha}.\]
The rest of the proof is similar to that of Theorem \ref{thmmain}.
\end{proof}

\begin{figure}
\begin{center}
\includegraphics[width=0.3\textwidth]{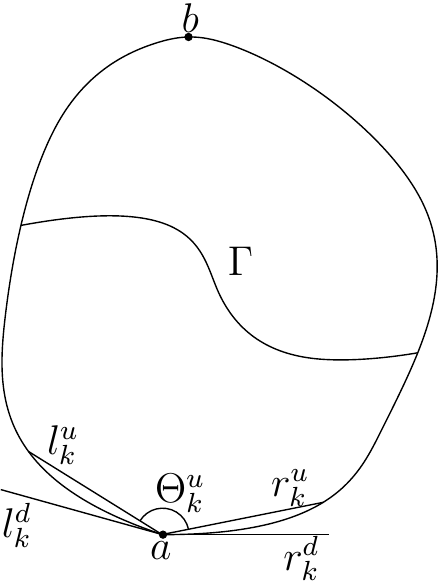}
\caption{An illustration of $l_k^d, l_k^u, r_k^d, r_k^u$, $\Theta_k^u$ and $\Gamma$.}\label{fig3}
\end{center}
\end{figure}

\begin{proof}[Proof of Proposition \ref{propbb}]
Note that the limit (in mean and thus in probability) of $T_{D_{\delta}}(a_{\delta}, b_{\delta})$ as $\delta\downarrow 0$ only depends on the local geometry near $a$ and $b$; this is because the expected number of $1$-clusters crossing in the bulk is finite by Theorem \ref{thm1}. One can approximate the portion of $\partial D$ near $a$ by sequences of secant lines and/or tangent lines. More precisely, suppose $\beta(t_0)=a$ where $t_0>0$. Then one can pick $\{\epsilon_k^u, k\geq 1\}$, $\{\epsilon_k^d, k\geq 1\}$, $\{\eta_k^u, k\geq 1\}$, $\{\eta_k^d, k\geq 1\}$ where  $\epsilon_{\cdot}^{\cdot}, \eta_{\cdot}^{\cdot}\geq 0$ and $\epsilon_k^{\cdot}\downarrow 0, \eta_k^{\cdot}\downarrow0$ as $k\rightarrow\infty$ such that: if we denote by $l_k^u$ (resp., $l_k^d$) the secant line segment between $\beta(t_0)$ and $\beta(t_0-\eta_k^u)$ (resp., $\beta(t_0-\eta_k^d)$) (if $\eta_k^{\cdot}=0$ we set $l_k^{\cdot}$ to be the left tangent line at $a$), and by $r_k^u$ (resp., $r_k^d$) the secant line segment between $\beta(t_0)$ and $\beta(t_0+\epsilon_k^u)$ (resp., $\beta(t_0+\epsilon_k^d)$) (if $\epsilon_k^{\cdot}=0$ we set $r_k^{\cdot}$ to be the right tangent line at $a$), then when we replace the portion of $\partial D$ at $\beta[t_0-\eta_k^u, t_0+\epsilon_k^u]$ by $l_k^u\cup r_k^u$ the resulting new domain is smaller (could be equal) and when we replace the portion of $\partial D$ at $\beta[t_0-\eta_k^d,t_0+\epsilon_k^d]$ by $l_k^d\cup r_k^d$ the resulting new domain is larger (could be equal). The angle subtended by $l_k^u$ and $r_k^u$ (resp., $l_k^d$ and $r_k^d$) is denoted by $\Theta_k^u$ (resp., $\Theta_k^d$). Let $\Gamma\subset D$ be a simple curve such that $a, b\notin \Gamma$, $a$ and $b$ are boundary points in different connected components of $D\setminus \Gamma$. See Figure \ref{fig3} for an illustration. Let $\Gamma_{\delta}$ be the lattice approximation of $\Gamma$. Define $T_{D_{\delta}}(a_{\delta},\Gamma_{\delta})$ to be the first passage time in $D_{\delta}$ between $a_{\delta}$ and $\Gamma_{\delta}$. Define $T_{D_{\delta}}^u(a_{\delta},\Gamma_{\delta})_k$ (resp., $T_{D_{\delta}}^d(a_{\delta},\Gamma_{\delta})_k$) to be the first passage time in the discrete approximation of the domain with boundary $(\partial D\setminus \beta[t_0-\eta_k^u, t_0+\epsilon_k^u])\cup l_k^u\cup r_k^u$ (resp., $(\partial D\setminus \beta[t_0-\eta_k^d, t_0+\epsilon_k^d])\cup l_k^d\cup r_k^d$) between $a_{\delta}$ and $\Gamma_{\delta}$. Then clearly
\[T_{D_{\delta}}^d(a_{\delta},\Gamma_{\delta})_k\leq T_{D_{\delta}}(a_{\delta},\Gamma_{\delta})\leq T_{D_{\delta}}^u(a_{\delta},\Gamma_{\delta})_k.\]
Corollary \ref{cormain} implies that
\begin{eqnarray*}
\lim_{\delta\downarrow 0}\frac{T_{D_{\delta}}^u(a_{\delta},\Gamma_{\delta})_k}{-\log\delta}=\frac{\sqrt{3}}{2\Theta_k^u}~a.s.,~ &&\lim_{\delta\downarrow 0}\frac{T_{D_{\delta}}^d(a_{\delta},\Gamma_{\delta})_k}{-\log\delta}=\frac{\sqrt{3}}{2\Theta_k^d}~a.s.,\\ \lim_{\delta\downarrow 0}\frac{E_{\delta}T_{D_{\delta}}^u(a_{\delta},\Gamma_{\delta})_k}{-\log\delta}=\frac{\sqrt{3}}{2\Theta_k^u},
&&\lim_{\delta\downarrow 0}\frac{E_{\delta}T_{D_{\delta}}^d(a_{\delta},\Gamma_{\delta})_k}{-\log\delta}=\frac{\sqrt{3}}{2\Theta_k^d}.
\end{eqnarray*}
It is clear that $\Theta_k^u\rightarrow \Theta_D(a)$ and $\Theta_k^d\rightarrow\Theta_D(a)$ as $k\rightarrow\infty$. Hence
\[\lim_{\delta\downarrow 0}\frac{T_{D_{\delta}}(a_{\delta},\Gamma_{\delta})}{-\log\delta}=\frac{\sqrt{3}}{2\Theta_D(a)}~a.s.,~\lim_{\delta\downarrow 0}\frac{E_{\delta}T_{D_{\delta}}(a_{\delta},\Gamma_{\delta})}{-\log\delta}=\frac{\sqrt{3}}{2\Theta_D(a)}.\]
Similar limits hold for $T_{D_{\delta}}(b_{\delta},\Gamma_{\delta})$. A standard argument as in (2.84) of \cite{KZ97} should yield
\[\lim_{\delta\downarrow 0}\frac{T_{D_{\delta}}(a_{\delta},b_{\delta})-T_{D_{\delta}}(a_{\delta},\Gamma_{\delta})-T_{D_{\delta}}(b_{\delta},\Gamma_{\delta})}{-\log\delta}=0 \text{ in probability}.\]
It is not hard to show the fractional expression of the last equation viewed as a sequence of $\delta$ is uniformly integrable. This completes the proof.
\end{proof}

\begin{proof}[Proof of Proposition \ref{propcyl}]
The basic idea is to show that both $E|c_n^+-s_{0,n}|$ and $E|c_n^+-s_{0,n}|^2$ are bounded above by some constant. Since the proof is very similar to the proof of Theorem 1.1 in \cite{Yao16}, we omit the details here.
\end{proof}

\section*{Acknowledgments}
We would like to thank the anonymous referees for many valuable comments and suggestions. We thank Greg Lawler and Chuck Newman for several useful discussions. The research of J.J. was partially supported by STCSM grant 17YF1413300 and that of C.-L.Y. by the National Natural Science Foundation of China (No. 11601505 and No. 11688101) and the Key Laboratory of Random Complex Structures and Data Science, CAS (No. 2008DP173182).

\end{document}